\newtheorem{thm}{Theorem}
\newtheorem{lem}[thm]{Lemma}
\newtheorem{prop}[thm]{Proposition}
\newtheorem{defn}[thm]{Definition}
\newenvironment{proof}[1][Proof]{\textbf{#1.}\ }{\ $\Box$}
\journal{Journal of Differential Equations}
\begin{document}
\begin{frontmatter}

\title{On a classification of polynomial differential operators }

\author{Jinzhi Lei }

\address{Zhou Pei-Yuan Center for Applied Mathematics,  Tsinghua University, Beijing, 100084, P.R.China}

\begin{abstract}
This paper gives a classification of first order polynomial differential operators of form $\mathscr{X} = X_1(x_1,x_2)\delta_1 + X_2(x_1,x_2)\delta_2$, $(\delta_i = \partial/\partial x_i)$.  The classification is given through the order of an operator that is defined in this paper.  Let $X=\mathscr{X}y$ to be the differential polynomial associated with $\mathscr{X}$,  the order of $\mathscr{X}$, $\mathrm{ord}(\mathscr{X})$, is defined as the order of a differential ideal $\Lambda$ of differential polynomials that is a nontrivial expansion of the ideal $\{X\}$ and with the lowest order.  In this paper, we prove that there are only four possible values for the  order of a differential operator, $0$, $1$, $2$, $3$, or $\infty$.  Furthermore, when the order is finite,  the expansion $\Lambda$ is generated by $X$ and a differential polynomial $A$, which can be obtained through a rational solution of a partial differential equation that is given explicitly in this paper. When the order is infinite, the expansion $\Lambda$ is just the unit ideal. In additional, if, and only if, the order of $\mathscr{X}$ is $0$, $1$, or $2$, the polynomial differential equation associating with $\mathscr{X}$ has Liouvillian first integrals. Examples for each class of differential operators are given at the end of this paper. 
\end{abstract}

\begin{keyword}
polynomial differential operator \sep classification \sep polynomial differential equation \sep differential algebra \sep Liouvillian first integral
\MSC 34A05 \sep 34A34 \sep 12H05
\end{keyword}

\end{frontmatter}

\section{Introduction}
\label{sec:0}

\subsection{Background}
This paper studies the polynomial differential operator
\begin{equation}
\label{eq:1}
\mathscr{X}  = X_1(x_1,x_2)\delta_1 + X_2(x_1,x_2)\delta_2,
\end{equation}
where $\delta_i  = \partial/\partial x_i\ (i=1,2)$, and $X_1(x_1,x_2), X_2(x_1,x_2)$ are polynomials of $x_1$ and $x_2$.  We further assume that $X_1\not\equiv 0$ without loss of generality. We will give a classification for all operators of form \eqref{eq:1}, according to which the solution of the first order partial differential equation
\begin{equation}
\label{eq:2}
\mathscr{X}\omega = 0
\end{equation}
is discussed. 

The operator \eqref{eq:1} closely relates to the following polynomial differential equation
\begin{equation}
\label{eq:3}
\dfrac{d x_1}{dt} = X_1(x_1,x_2),\quad \dfrac{d x_2}{dt} = X_2(x_1,x_2), 
\end{equation}
and non constant solutions of \eqref{eq:2} give first integrals of \eqref{eq:3}.  Therefore, our results also yield a classification of the polynomial systems \eqref{eq:3}. 

The current study was motivated by investigating integrating methods of a polynomial differential equation of form \eqref{eq:3}. We first look at a simple situation. 
If the equation \eqref{eq:3} has an integrating factor $\mu$ which is a rational function of $x_1$ and $x_2$, a first integral $\omega$ of \eqref{eq:3} can be obtained by an integral of a rational function, and further, we have
\begin{equation}
\label{eq:if2}
\delta_1 \omega - a = 0,
\end{equation}
where $a = \mu X_1$ is a rational function. Therefore, there is a non constant function $\omega$ that satisfies both equations \eqref{eq:2} and \eqref{eq:if2}. In this case, the differential operator $\mathscr{D}_A$ defined as   
\begin{equation}
\mathscr{D}_A\omega = \delta_1 \omega - a
\end{equation}
is compatible with $\mathscr{X}$. In other words, if we define two differential polynomials 
$$X = \mathscr{X}y,\quad A = \mathscr{D}_Ay,$$
they can generate a differential ideal $\{X,A\}$ which is a nontrivial expansion of the ideal $\{X\}$ (refer detail definitions below). This simple situation suggests that to integrate the equation \eqref{eq:3} for first integrals,  we need  to find a differential polynomial $A$ such that $\{X,A\}$ is a nontrivial expansion of the ideal $\{X\}$. The differential polynomial $A$, if exist, is not unique. Nevertheless, we will show that the lowest order among these differential polynomials is uniquely determined by the original differential operator $\mathscr{X}$ (called the order of $\mathscr{X}$, to be detailed below), and therefore provides a classification. 

The classification presented in this study is obtained from the order of the operator $\mathscr{X}$. This order is essential for understanding integrating methods the polynomial differential equation \eqref{eq:3} in different classes, and also the classification of un-integrable systems. Furthermore, for a given equation \eqref{eq:3}, the above differential polynomial $A$ in defining the nontrivial expansion $\{X,A\}$ provides additional informations for the first integral, which are important for further investigations of the structure of integrating curves (or foliations) of the equation.  Applications based on the classification given here is interested in future studies.

\subsection{Preliminary definitions}
Before stating the main results, we give some preliminary concepts from differential algebra. For detail discussions, refer\cite{Ka:76} and \cite{Ritt:50}.

Let $K$ to be the field of all rational functions of $(x_1, x_2)$ with complex number coefficients,  and $\delta_1, \delta_2$ are two \textit{derivations} of $K$.  Then $K$ together with the two derivations form a \textit{differential field}, with $\mathbb{C}$ as the constant field. For a \textit{differential indeterminate} $y$, there is a usual way to add $y$ to the differential field $K$, by adding an infinite sequence of symbols
\begin{equation}
\label{eq:4}
y, \delta_1y, \delta_2y,\delta_1\delta_2y,\cdots, \delta_1^{i_1}\delta_2^{i_2}y,\cdots
\end{equation}
to $K$ \cite{Ka:76}.  This procedure results in a differential ring, denoted as $K\{y\}$.  Each  element in $K\{y\}$ is a polynomial of finite numbers of the symbols in \eqref{eq:4}, and therefore is a \textit{differential polynomial} in $y$ with coefficients in $K$. 

We say an algebra ideal $\Lambda$ in $K\{y\}$ to be a \textit{differential ideal} if $a\in \Lambda$ implies $\delta_i a \in \Lambda\ (i=1,2)$.   Let $\Sigma$ be any aggregate of differential polynomials. The intersection of all differential ideals containing  $\Sigma$ is called the \textit{differential ideal generated by $\Sigma$}, and is denoted by $\{\Sigma\}$. A differential polynomial $A$ is in $\{\Sigma\}$ if, and only if, $A$ is a linear combination of differential polynomials in $\Sigma$ and of derivatives, of various orders, of such differential polynomials.

\begin{defn}
Let
$$w_1= \delta_1^{i_1}\delta_2^{i_2}y,\quad w_2 = \delta_1^{j_1}\delta_2^{j_2}y,$$
be two derivatives of $y$, $w_2$ is \textbf{higher} than $w_1$ if either $j_1>i_1$, or $j_1 = i_1$ and $j_2  > i_2$. The indeterminate $y$ is always higher than any element in $K$.
\end{defn}

\begin{defn}
Let $A$ be a differential polynomial, if $A$ contains $y$ (or its derivatives) effectively,  by the \textbf{leader} of $A$, we mean the highest of those derivatives of $y$ which are involved in $A$.  
\end{defn}

\begin{defn}
Let $A_1, A_2$ be two differential polynomials, we say $A_2$ to be of higher \textbf{rank} than $A_1$, if either
\begin{enumerate}
\item[(1)] $A_2$ has higher leader than $A_1$; or
\item[(2)] $A_1$ and $A_2$ have the same leader, and the degree of $A_2$ in the leader exceeds that of $A_1$.
\end{enumerate}
A differential polynomial which effectively involves the intermediate $y$ will be of higher rank than one which does not.
Two differential polynomials of which no difference in the rank as created above will be said to be of the same rank.
\end{defn}

Following fact is basic \cite[pp.~3]{Ritt:50}:

\begin{prop}
\label{prop:1}
Every aggregate of differential polynomials contains a differential polynomial which is not higher than any other differential polynomials in the aggregate.
\end{prop}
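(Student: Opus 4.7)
The plan is to exhibit a minimal element by successively restricting the aggregate to smaller and smaller subsets, each time picking out the members that minimize one component of the rank. The whole proof rests on the well-foundedness of $\mathbb{N}$: any nonempty subset of $\mathbb{N}$ has a least element, and the rank of a differential polynomial is ultimately encoded by a triple of natural numbers, ordered lexicographically.

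More concretely, let $\Sigma$ be the given aggregate. First I would dispose of the dichotomy built into the definition of rank. If some element of $\Sigma$ does not effectively involve $y$, then by the clause ``a differential polynomial which effectively involves the indeterminate $y$ will be of higher rank than one which does not,'' any such element is already not higher than any other member of $\Sigma$, and we are done. Otherwise, every $A\in\Sigma$ involves $y$ and hence has a leader $\delta_1^{i_1}\delta_2^{i_2}y$ with $(i_1,i_2)\in\mathbb{N}^2$.

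Now I carry out three successive minimizations. Let $\Sigma_1\subseteq\Sigma$ consist of those $A$ whose leader exponent $i_1$ attains the minimum of $\{i_1(A):A\in\Sigma\}$, which exists as a nonempty subset of $\mathbb{N}$. Let $\Sigma_2\subseteq\Sigma_1$ consist of those $A\in\Sigma_1$ whose leader exponent $i_2$ attains the minimum of $\{i_2(A):A\in\Sigma_1\}$, again nonempty and hence possessing a least element. Every element of $\Sigma_2$ now has the same leader $w^\ast=\delta_1^{i_1^\ast}\delta_2^{i_2^\ast}y$. Finally let $\Sigma_3\subseteq\Sigma_2$ consist of those $A\in\Sigma_2$ whose degree in $w^\ast$ attains the minimum of $\{\deg_{w^\ast}A:A\in\Sigma_2\}$. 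Any $A^\ast\in\Sigma_3$ is then of minimum rank: for arbitrary $B\in\Sigma$, if $B$ does not involve $y$ the case is impossible here, and otherwise $B$ has leader no lower than $w^\ast$ by construction, and if the leader equals $w^\ast$ the degree of $B$ in $w^\ast$ is no smaller than that of $A^\ast$, so in every case $A^\ast$ is not higher than $B$.

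There is no real obstacle here; the only thing one must be careful about is that the lexicographic comparison of leaders is indeed well-founded, which follows at once from well-foundedness of $\mathbb{N}$ applied three times in succession (once for $i_1$, once for $i_2$, once for the degree). No appeal to the axiom of choice or to any Noetherian property of $K\{y\}$ is required, since we are only asserting the existence of a rank-minimal element of an arbitrary set, not the termination of any reduction process.
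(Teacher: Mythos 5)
Your proof is correct: the rank is determined by the lexicographically ordered pair of leader exponents together with the degree in the leader, and three successive minimizations over nonempty subsets of $\mathbb{N}$ (after first disposing of the case of an element lying in $K$, which is automatically of lowest rank) produce a rank-minimal element. The paper gives no proof of its own here — it cites this as a basic fact from Ritt — and your argument is essentially the standard well-ordering argument given in that reference, so there is nothing to reconcile.
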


For the operator $\mathscr{X}$ given by \eqref{eq:1}, we have 
\begin{equation}
\label{eq:5}
X= \mathscr{X}y = X_1 \delta_1y+X_2\delta_2y\in K\{y\}.
\end{equation}
Let $\{X\}$ denote the differential ideal in $K\{y\}$ that is generated by $X$.  A differential ideal $\Lambda$ in $K\{y\}$ that contains $\{X\}$ as a proper subset will be called an \textit{expansion of $\{X\}$}, or an \textit{expansion of $\mathscr{X}$}.  In this paper, we will show that expansions of $\mathscr{X}$ with the lowest order (to be defined below) will be essential to provide the classification of $\mathscr{X}$.

Let $\Lambda$ to be an expansion of $\{X\}$, Proposition \ref{prop:1} yields that there is a differential polynomial $A\in \Lambda$ that has the lowest rank. Therefore, the leader of $A$ is lower than the leader of $X$, $\delta_1y$. Thus, either the leader of $A$ has form $\delta_2^r y\ (r\geq 0)$,  or $A$ does not involve the intermediate $y$, i.e., $A\in K$.  In the former situation, $r$ will be called the \textbf{\textit{order}} of $\Lambda$,  denoted by $\mathrm{ord}(\Lambda)$.  The latter situation will be called to have order of infinity, i.e., $\mathrm{ord}(\Lambda)=\infty$. 

For a differential polynomial $A\in K\{y\}$, we associate with $A$ a \textit{differential operator} $\mathscr{D}_A$ on analytic functions $\mathcal{A}(\Omega)$, where $\Omega$ is an open subset of $\mathbb{C}^2$, such that
\begin{equation}
\label{eq:oa}
\mathscr{D}_Au = A|_{y=u},\quad \forall u\in \mathcal{A}(\Omega).
\end{equation}
By $S(\mathscr{D}_A)$, we denote the singularity set of $\mathscr{D}_A$, which contains all singularity points in the coefficients of the differential polynomial $A$. Because the coefficients of $A$ are rational functions, the singularity set $\mathscr{D}_A$ is a closed subset in $\mathbb{C}^2$.  Thus, for any $u\in \mathcal{A}(\Omega)$, $\mathscr{D}_A u$ is well defined in the open subset $\Omega\backslash S(\mathscr{D}_A)$.

We will called an expansion of $\mathscr{X}$, $\Lambda$, to be \textit{nontrivial} if there exists an open subset $\Omega\subset \mathbb{C}^2$ and a non constant function $\omega\in \mathcal{A}(\Omega)$, such that $\mathscr{D}_Au = 0$ in $\Omega\backslash S(\mathscr{D}_A)$ for all $A\in \Lambda$. Otherwise, the expansion is called \textit{trivial}.
Examples of trivial expansion include $\{X, p(y)\}$ with $p(y)$ a proper polynomial of $y$ with constant coefficients (not a differential polynomial).

For a nontrivial expansion $\Lambda$,  a differential polynomial with the lowest rank can only take one of the following forms:
\begin{itemize}
\item a polynomial of $y$, with at least one coefficient that is non constant ($\mathrm{ord}(\Lambda) = 0$); or
\item a differential polynomial of $y$ effectively involves derivatives ($1\leq \mathrm{ord}(\Lambda) < \infty$); or 
\item an element in $K$, and therefore $\Lambda = K\{y\}$ ($\mathrm{ord}(\Lambda) = \infty$).
\end{itemize}

\subsection{Main results}

In this paper, we are interested at nontrivial expansions of $\mathscr{X}$ with the lowest order, called \textit{essential expansions of $\mathscr{X}$}.  For a given differential operator $\mathscr{X}$, essential expansions of $\mathscr{X}$ may not unique, but all essential expansions have the same order, which we call the \textit{\textbf{order} of $\mathscr{X}$}, and is denoted as $\mathrm{ord}(\mathscr{X})$.  We will show that $\mathrm{ord}(\mathscr{X})$ provides a classification of polynomial differential operators.

\begin{thm}
\label{th:1} Let the polynomial differential operator $\mathscr{X}$ given by \eqref{eq:1}, with coefficients $X_1,X_2\in K$,  then either
$0\leq \mathrm{ord}(\mathscr{X})\leq 3$, or $\mathrm{ord}(\mathscr{X}) = \infty$.
Furthermore, when $0\leq \mathrm{ord}(\mathscr{X})\leq 3$, we can always select an essential expansion $\Lambda$ of $\mathscr{X}$, such that $\Lambda = \{X, A\}$, with $A\in K\{y\}$ given below
\begin{enumerate}
\item[(1)] if $\mathrm{ord}(\mathscr{X}) = 0$, then
\begin{equation}
\label{eq:7}
A =  y - a,\quad (a\in K\backslash\mathbb{R});
\end{equation}
\item[(2)] if $\mathrm{ord}(\mathscr{X}) = 1$, then
\begin{equation}
\label{eq:8}
A=(\delta_2y)^n - a,\quad (n\in \mathbb{N}, a\in K);
\end{equation}
 \item[(3)] if $\mathrm{ord}(\mathscr{X})  = 2$, them
 \begin{equation}
 \label{eq:9}
A=\delta_2^2y - a \delta_2 y,\quad (a\in K);
 \end{equation}
\item[(4)] if $\mathrm{ord}(\mathscr{X}) = 3$, then
\begin{equation}
\label{eq:10}
A= 2 (\delta_2 y) (\delta_2^3 y) - 3 (\delta_2^2 y)^2 - a (\delta_2 y)^2,\quad (a\in K).
\end{equation}
\end{enumerate}
\end{thm}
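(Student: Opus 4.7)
The strategy is to start from an essential expansion $\Lambda$ of $\mathscr{X}$ and pick a differential polynomial $A\in\Lambda$ of lowest rank. From the discussion preceding the theorem, either $A\in K$, in which case $\Lambda=K\{y\}$ and $\mathrm{ord}(\mathscr{X})=\infty$, or the leader of $A$ is $\delta_2^r y$ for some $r\ge 0$ and $r=\mathrm{ord}(\mathscr{X})$. The plan is to show that the minimality of the rank of $A$, combined with the fact that $\Lambda$ is a differential ideal containing $X$, forces $A$ into one of the normal forms \eqref{eq:7}--\eqref{eq:10} when $r\le 3$ and rules out every $r\ge 4$.

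The central tool is the observation that, for any $A\in\Lambda$, the differential polynomial $DA:=X_1\delta_1 A+X_2\delta_2 A$ again lies in $\Lambda$. If $A$ has leader $\delta_2^r y$ of degree $n$ with leading coefficient $\alpha$, then the topmost contribution to $DA$ is $n\alpha(\delta_2^r y)^{n-1}\bigl[X_1\delta_1\delta_2^r y+X_2\delta_2^{r+1}y\bigr]$, and this bracket equals $\delta_2^r X$ minus terms of strictly lower leader. Subtracting $n\alpha(\delta_2^r y)^{n-1}\delta_2^r X\in\{X\}$ and then iteratively reducing the residual $\delta_1\delta_2^k y$ terms with $k<r$ by means of the relations $\delta_2^k X\in\{X\}$, one reduces $DA$ modulo $\{X\}$ to a differential polynomial $B\in\Lambda$ whose leader is at most $\delta_2^r y$. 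A Ritt-style pseudo-reduction of $B$ by $A$ then yields an element of $\Lambda$ of strictly lower rank than $A$ unless $B\equiv Q\cdot A\pmod{\{X\}}$ for some differential polynomial $Q$ not involving $\delta_2^r y$. By the minimality of the rank of $A$ this identity must hold, and it is precisely the compatibility equation that determines the form of $A$.

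One then solves this compatibility equation case by case. For $r=0$, $A\in K[y]$ and the compatibility collapses $A$ to degree one, giving $A=y-a$ with $a\in K$ nonconstant. For $r=1$, writing $A$ as a polynomial in $\delta_2 y$ with coefficients in $K[y]$, the compatibility pins down the $y$-dependence and fixes the exponent, yielding $A=(\delta_2 y)^n-a$. For $r=2$, the degree in $\delta_2^2 y$ must drop to one and the free term in $\delta_2 y$ must vanish, producing $A=\delta_2^2 y-a\,\delta_2 y$. For $r=3$, an explicit computation shows that simultaneous cancellation at two successive levels in the reduction of $DA$ forces the Schwarzian-like combination $2(\delta_2 y)(\delta_2^3 y)-3(\delta_2^2 y)^2-a(\delta_2 y)^2$, which is \eqref{eq:10}. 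In each case the rational coefficient $a$ (and, for $r=1$, the integer $n$) is seen to satisfy the first-order PDE over $K$ alluded to in the abstract, whose solvability characterises membership of the corresponding class.

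The main obstacle is the exclusion of $r\ge 4$: here the compatibility equation becomes overdetermined, and one must verify that the pseudo-remainder $B-Q\,A$ always has a nonzero component of leader strictly less than $\delta_2^r y$, which by minimality gives a contradiction. Conceptually, this reflects the fact that, beyond the three natural invariants (algebraic, exponential and Schwarzian) met in the cases $r=1,2,3$, no further lowest-order polynomial relation among $\delta_2\omega,\delta_2^2\omega,\ldots,\delta_2^r\omega$ can generate an essential expansion of $\mathscr{X}$ without already subsuming one of lower order. Turning this qualitative picture into an explicit algebraic identity inside $K\{y\}$---matching every coefficient produced by the iterated reductions against the single datum $\alpha$ of $A$---is the most technical step of the proof.
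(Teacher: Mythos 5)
Your overall strategy---apply $\mathscr{X}$ to a lowest-rank element $A$ of an essential expansion, reduce modulo $\{X\}$ using $\delta_2^k X\in\{X\}$, and exploit minimality of rank to force the result to vanish---is exactly the engine of the paper (its Lemmas \ref{lem:a.1}, \ref{lem:a.2}, \ref{lem:a.6} and \ref{lem:a.9}, which turn the vanishing of the reduction into the system of PDEs \eqref{eq:a.2} for the coefficients $a_{\mathbf{m}}$). However, as written the proposal has two genuine gaps. First, the exclusion of $4\leq r<\infty$ is asserted rather than proved: saying the compatibility equation ``becomes overdetermined'' and that no further invariant exists is a restatement of the conclusion. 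The paper's actual argument here is a specific combinatorial analysis of the index set $\mathcal{I}_A$ via the weight $C(\mathbf{m})=\sum_j jm_j$: one first shows (Lemmas \ref{lem:a.10}, \ref{lem:c}, \ref{lem:a.11}) that the top index $\mathbf{m}^*$ must have $m_1^*>0$ and $m_2^*=0$ and that $C(\mathbf{m})\leq C(\mathbf{m}^*)$ on $\mathcal{I}_A$, and then, for the first index $k>2$ with $m_k^*>0$, produces explicit indices $\mathbf{p}=\Delta_{1,k}(\mathbf{m}^*)$ and $\mathbf{m}=\Delta_{k-2,k-1}^{-1}(\mathbf{p})$ whose coefficient must be nonzero yet has $C(\mathbf{m})>C(\mathbf{m}^*)$, a contradiction. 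Without some equivalent of this bookkeeping your $r\geq 4$ case does not close.

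Second, the theorem claims one can \emph{select} an essential expansion of the form $\{X,A\}$ with $A$ as in \eqref{eq:7}--\eqref{eq:10}; the paper does not show that the original lowest-rank $A$ itself collapses to these normal forms (for $r=0$ it may have higher degree in $y$; for $r=1$ it may be $(\delta_2y)^n p(y)$ plus lower terms). Rather, it extracts from the coefficient equations a single rational function $a$ satisfying $\mathscr{X}a=nb_0a$, $\mathscr{X}a=b_0a+b_1$, or $\mathscr{X}a=2b_0a+b_2$, and builds a \emph{new} $B$ from it. One must then prove that $\{X,B\}$ is a \emph{nontrivial} expansion, i.e.\ that the system $\mathscr{X}y=0$, $\mathscr{D}_By=0$ admits a nonconstant analytic solution; this is the content of Lemmas \ref{lem:a.3}, \ref{lem:a.4} and \ref{lem:a.15}, which require constructing closed $1$-forms and, in the $r=3$ case, solving the nonlinear compatible system \eqref{eq:a.4} by the Method of Majorants. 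Your proposal treats only the algebraic side in $K\{y\}$ and never verifies nontriviality of the constructed expansion, so the ``we can always select'' part of the statement is not established.
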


From Theorem \ref{th:1}, when the order of a differential operator $\mathscr{X}$ is finite, an essential expansion of $\mathscr{X}$ is given by $\Lambda = \{X,A\}$, with $A\in K\{y\}$ given by \eqref{eq:7}-\eqref{eq:10}. Discussions in \cite[Chapter 2]{Ritt:50} have shown that the system of equations
\begin{equation}
\mathscr{X} y  = 0,\quad \mathscr{D}_A y = 0
\end{equation}
has solution in some extension file of $K$.  It is easy to see that this solution gives a first integral of the polynomial differential equation \eqref{eq:3}. Following result for the classification of \eqref{eq:3} is straightforward from Theorem \ref{th:1}.
\begin{thm}
\label{th:3}
Consider the polynomial differential equation \eqref{eq:3}, and let $\mathscr{X}$  the corresponding  differential operator given by \eqref{eq:1}, we have the following
\begin{enumerate}
\item[(1)] if $\mathrm{ord}(\mathscr{X}) = 0$, then \eqref{eq:3} has a first integral $\omega\in K$;
\item[(2)] if $\mathrm{ord}(\mathscr{X}) = 1$, then \eqref{eq:3} has a first integral $\omega$, such that
$$(\delta_2\omega)^n \in K$$
for some $n\in \mathbb{N}$;
\item[(3)] if $\mathrm{ord}(\mathscr{X}) = 2$, then \eqref{eq:3} has a first integral $\omega$, such that
$$\delta_2^2\omega/\delta_2\omega \in K;$$
\item[(4)] if $\mathrm{ord}(\mathscr{X}) = 3$, then \eqref{eq:3} has a first integral $\omega$, such that
$$\dfrac{2(\delta_2\omega) (\delta_2^3\omega) - 3 (\delta_2^2\omega)^2}{(\delta_2 \omega)^2} \in K;$$
\item[(5)] if $\mathrm{ord}(\mathscr{X}) = \infty$, then any first integral of \eqref{eq:3} does not satisfy any differential equation of form
$$\mathscr{D}_Ay =0$$
with $A\in K\{y\}\backslash\{X\}$.
\end{enumerate}
\end{thm}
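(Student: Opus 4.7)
The plan is to read Theorem \ref{th:3} as a direct translation of Theorem \ref{th:1} under the definition of nontriviality. For each finite-order case (1)--(4), I would invoke Theorem \ref{th:1} to obtain an essential expansion $\Lambda=\{X,A\}$ with $A$ of the prescribed shape, then unpack nontriviality: it supplies an open set $\Omega\subset\mathbb{C}^2$ and a non-constant $\omega\in\mathcal{A}(\Omega)$ with $\mathscr{D}_B\omega=0$ on $\Omega\setminus S(\mathscr{D}_B)$ for every $B\in\Lambda$. Applying this with $B=X$ shows $\mathscr{X}\omega=0$, so $\omega$ is a first integral of \eqref{eq:3}; applying it with $B=A$ produces the stated identity after rearrangement.

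The case-by-case reading of $\mathscr{D}_A\omega=0$ is mechanical. Substituting $y=\omega$ into \eqref{eq:7} gives $\omega=a\in K$; into \eqref{eq:8} gives $(\delta_2\omega)^n=a\in K$; into \eqref{eq:9} gives $\delta_2^2\omega=a\,\delta_2\omega$, which after division yields $\delta_2^2\omega/\delta_2\omega=a\in K$; and into \eqref{eq:10} gives the claimed rational expression after dividing by $(\delta_2\omega)^2$. The one point requiring a word is that $\delta_2\omega$ does not vanish identically on $\Omega$, so division is legitimate: if $\delta_2\omega\equiv 0$ on an open set, then $\mathscr{X}\omega=0$ together with $X_1\not\equiv 0$ forces $\delta_1\omega\equiv 0$ there, contradicting the non-constancy of $\omega$.

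For the infinite-order statement (5) I would argue by contrapositive. Assume some first integral $\omega$ satisfies $\mathscr{D}_A\omega=0$ for some $A\in K\{y\}\setminus\{X\}$. Set $\Lambda=\{X,A\}$; since $A\notin\{X\}$ this is a proper expansion of $\{X\}$, and it is nontrivial because $\omega$ is a common solution (the derivation property of $\delta_i$ under the substitution $y=\omega$ propagates $\mathscr{D}_X\omega=\mathscr{D}_A\omega=0$ to every $\delta_1^{i_1}\delta_2^{i_2}$-derivative and every $K\{y\}$-linear combination, hence to all of $\{X,A\}$). By the trichotomy recalled just before the main results, the lowest-rank element of $\Lambda$ either lies in $K$, in which case $\Lambda=K\{y\}$ and $1\in\Lambda$ contradicts nontriviality, or has leader $\delta_2^r y$ for some $r\ge 0$, making $\mathrm{ord}(\Lambda)$ finite and therefore $\mathrm{ord}(\mathscr{X})\le r<\infty$, contradicting the hypothesis.

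No step is a genuine obstacle: the theorem is essentially the evaluation map $y\mapsto\omega$ applied to the algebraic classification of Theorem \ref{th:1}. The only subtleties are the non-vanishing of $\delta_2\omega$ in cases (3)--(4) noted above and the careful bookkeeping that passing from $\mathscr{D}_X\omega=\mathscr{D}_A\omega=0$ to $\mathscr{D}_B\omega=0$ for all $B\in\{X,A\}$ is a consequence of $\partial_{x_i}(B|_{y=\omega})=(\delta_i B)|_{y=\omega}$; both are routine.
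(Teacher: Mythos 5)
Your proposal is correct and follows essentially the same route as the paper, which simply declares Theorem \ref{th:3} ``straightforward from Theorem \ref{th:1}'' via the common non-constant solution of $\mathscr{X}y=0$, $\mathscr{D}_Ay=0$ supplied by the nontrivial essential expansion $\{X,A\}$; you merely make explicit the details the paper leaves implicit (the non-vanishing of $\delta_2\omega$ needed for the divisions in cases (3)--(4), and the contrapositive argument for case (5)).
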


In 1992, Singer have proved that the first three cases in Theorem \ref{th:3} (also refer Theorem \ref{th:2} below) are the only cases to have  Liouvillian integrals, i.e., there is a first integral that can be obtained from rational functions using finite steps of exponentiation, integration, an algebraic functions \cite{Singer:92}(also refer \cite{Guan:02}).  In the latter two cases,  however, the first integral of \eqref{eq:3} can not be obtained in finite steps  by the above operations from rational functions (refer \cite{Singer:92} or \cite{Guan:02}).  From the proof of Lemma \ref{lem:a.15} given below, when $\mathrm{ord}(\mathscr{X}) = 3$, the first integral of \eqref{eq:3} can be obtained through finite step operations from rational functions and a solution of the partial differential equation of form \eqref{eq:a.4}.

In the rest of this paper, we will first give the proof of Theorem \ref{th:1} in Section \ref{sec:proof}, and then give examples for each types of equations in Section \ref{sec:appl}.

\section{Proof of  the Main Result}
\label{sec:proof}
\subsection{Outline of the proof}

We always assume $X_1 \not\equiv 0$ without loss of generality.  Hereinafter, we denote $\delta_2^i y$ by $y_i$ ($y_0 = y$). For any essential expansion $\Lambda$ of $\mathscr{X}$, let $A\in \Lambda$ with the lowest rank.  From the above definitions, if $\mathrm{ord}(\mathscr{X}) = r (< \infty)$, then $A$ is a polynomial of $y_0, y_1, \cdots, y_r$, with coefficients in $K$. Write
\begin{equation}
A = \sum_\mathbf{m} a_\mathbf{m} y_0^{m_0}y_1^{m_1}\cdots y_r^{m_r},
\end{equation}
where $\mathbf{m} = (m_0, m_1,\cdots, m_r)\in \mathbb{Z}^{r+1}$, and $a_\mathbf{m}\in K$.  To prove Theorem \ref{th:1}, we only need to determine all possible non-zero coefficients in $A$.  Let
\begin{equation}
\mathcal{I}_A = \{\mathbf{m}\in \mathbb{Z}^{r+1}| a_\mathbf{m}\not=0\}.
\end{equation}
We only need to specify the finite set $\mathcal{I}_A$. The process is outlined below.

Let $\mathbf{m} = (m_0, m_1, \cdots, m_r)\in \mathbb{Z}^{r+1}$, we define an operators $\Delta_{i,j}:\mathbf{Z}^{r+1}\to\mathbf{Z}^{r+1}$ for $0<i<j\leq r$ such that  $\Delta_{i,j}(\mathbf{m})\in\mathbb{Z}^{r+1}$ is given by
\begin{equation}
\Delta_{i,j}(\mathbf{m}) = \mathbf{m} + \mathbf{e}_{j-i}  - \mathbf{e}_j
\end{equation}
where
$$\mathbf{e}_{k} = (\stackrel{\begin{array}{c}0\\ \downarrow\end{array}}{0},\cdots, 0,\stackrel{\begin{array}{c}k\\ \downarrow\end{array}}{1},0,\cdots, 0).$$
Therefore
\begin{equation}
\Delta_{i,j}^{-1}(\mathbf{m}) = \mathbf{m} - \mathbf{e}_{j-i}  + \mathbf{e}_j.
\end{equation}

For any $\mathbf{m}, \mathbf{n} \in \mathbb{Z}^{r+1}$, we will say $\mathbf{m}\succ \mathbf{n}$ if there exist $0<i<j\leq r$, such that
$$\Delta_{i,j}(\mathbf{m}) = \mathbf{n}.$$

The proof will be done by showing that if $r=\mathrm{ord}(\mathscr{X}) < \infty$, then $\mathcal{I}_A$ can only be one of the following cases:
\begin{enumerate}
\item[(1)] $r = 0$, and $\mathcal{I}_A = \{(1), (0)\}$; or
\item[(2)] $r = 1$, and $\mathcal{I}_A = \{(0,n), (0,0)\}$; or
\item[(3)] $r = 2$, and
$\mathcal{I}_A = \{(0,0,1), (0,1,0)\}$, with
$$(0, 0,1) \succ (0,1,0);$$
or
\item[(4)] $r = 3$, and $\mathcal{I}_A = \{(0,1,0,1), (0, 0,2,0), (0, 2,0,0)\}$, with relations
\begin{center}
\unitlength=0.5cm
\begin{picture}(8,4.5)
\put(2,0){(0,2,0,0)} \put(2,2){(0,1,1,0)} \put(2,4){(0,1,0,1)}
\put(5.5,2){(0,0,2,0)} \put(2.8,1){$\curlyvee$}
\put(2.8,3){$\curlyvee$} \put(4.8,2){$\prec$}
\put(1.0,4.2){\line(1,0){0.8}} \put(1.0,4.2){\line(0,-1){1.2}}
\put(1.0,0.2){\line(0,1){1.2}}\put(1.0,0.2){\line(1,0){0.8}}
\put(0.8,2.0){$\curlyvee$}
\end{picture}
\end{center}
Here $(0,1,1,0)$ is an auxiliary index with $a_{(0,1,1,0 )} = 0$.
\end{enumerate}

The final proof will be done  after 14 preliminary Lemmas, following the flow chart given in Figure \ref{fig:1}.

\begin{figure}[hbtp]
\centering
\unitlength=1.3em
\begin{picture}(24,9)
\put(18,8){\framebox{Lemma \ref{lem:a.1}}}
\put(12,8){\framebox{Lemma \ref{lem:a.2}}}
\put(6,8){\framebox{Lemma \ref{lem:a.6}}}
\put(0,8){\framebox{Lemma \ref{lem:a.9}}}
\put(0,6){\framebox{Lemma \ref{lem:a.8}}}
\put(0,2){\framebox{Lemma \ref{lem:c}}}
\put(0,4){\framebox{Lemma \ref{lem:a.10}}} 
\put(6,4){\framebox{Lemma \ref{lem:a.3}}} 
\put(18,4){\framebox{Lemma \ref{lem:a.4}}} 
\put(18,2){\framebox{Lemma \ref{lem:a.15}}}
\put(6,2){\framebox{Lemma \ref{lem:a.0}}}
\put(12,4){\framebox{Lemma \ref{le:0}}}
\put(12,2){\framebox{Lemma \ref{le:app}}}
\put(18,6){\framebox{Lemma \ref{lem:a.11}}} 
\put(-0.7,0){\framebox[31.2em]{Theorem \ref{th:1}}}

\put(17.9,8.2){\vector(-1,0){1.75}} 
\put(11.9,8.2){\vector(-1,0){1.75}}
\put(5.9,8.2){\vector(-1,0){1.75}} 
\put(2,7.6){\vector(0,-1){0.8}}
\put(4.2,6.2){\vector(1,0){13.6}} 
\put(2,5.6){\vector(0,-1){0.8}}
\put(2,1.6){\vector(0,-1){0.8}}
\put(5.8,4.2){\vector(-1,0){1.75}} 
\put(5.8,2.2){\vector(-1,1){1.75}} 
\put(20,4.8){\vector(0,1){0.8}} 
\put(20,1.6){\vector(0,-1){0.8}}  
\put(16.2,2.2){\vector(1,0){1.75}}
\put(2,3.8){\vector(0,-1){1.0}} 
\put(11.9,4.25){\vector(-1,0){1.88}}
\put(15.9,4.25){\vector(1,0){1.9}}
\put(11.9,4.2){\vector(-1,-1){1.85}}
\put(15.9,4.2){\vector(1,-1){1.9}}
\put(-0.5,6.2){\vector(0,-1){5.3}}
\put(8,1.6){\vector(0,-1){0.8}} 
\put(0,6.2){\line(-1,0){0.5}} 
\put(5.8,4.0){\line(-1,0){0.5}} 
\put(5.3,4.0){\vector(0,-1){3.2}} 
\put(22.1,4.2){\line(1,0){0.5}}
\put(22.6,4.2){\vector(0,-1){3.3}}
\put(22.1,6.2){\line(1,0){1.0}}
\put(23.1,6.2){\vector(0,-1){5.3}}

\end{picture}
\caption{Flow chart of the proof of Theorem \ref{th:1}}
\label{fig:1}
\end{figure}
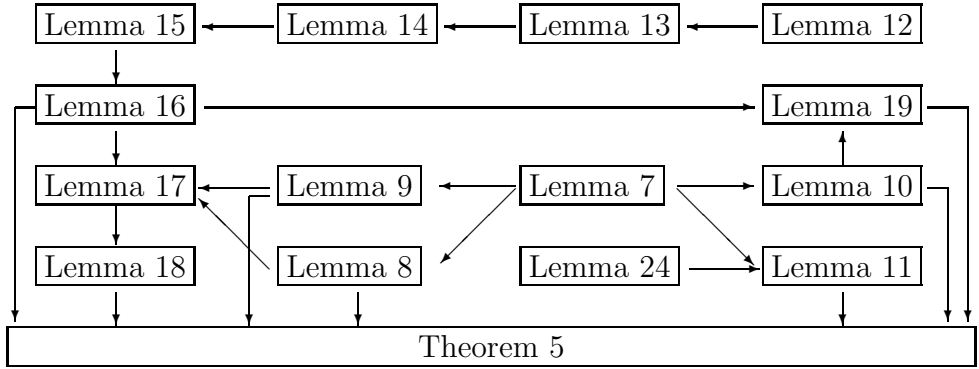

\subsection{Preliminary notations}

Before proving Theorem \ref{th:1}, we introduce some 
notations as following. 

Let
$$[\delta_2, \mathscr{X}] = \delta_2 \mathscr{X} -\mathscr{X}\delta_2 =  (\delta_2X_1) \delta_1 + (\delta_2X_2) \delta_2,$$
$$b_0 = -X_1\,(\delta_2\frac{X_2}{X_1}),\quad b_i = X_1\,(\delta_2\frac{b_{i-1}}{X_1}) = -X_1\,(\delta_2^{i+1}\frac{X_2}{X_1}),\ \  i = 1,2,\cdots$$
For $F\in K\{y\}$, and $\{X\}$ be the differential ideal that is generated by $X=\mathscr{X}y$, we write
\begin{equation}
F\sim R
\end{equation}
if $R\in K\{y\}$ such that $F - R\in\{X\}$.

Let $\mathbf{m}, \mathbf{n}\in {\mathbb{Z}}^{r+1}$, the \textit{degree} of
$\mathbf{n}$ is higher than that of $\mathbf{m}$, denoted by
$\mathbf{n}
> \mathbf{m}$, if there exists $0\leq k\leq r$ such that $n_k > m_k$ and
$$n_i = m_i,\ \ i = k+1, \cdots, r.$$

It is easy to verify that the relation $\succ$ implies $>$, and for any $\mathbf{m}\in \mathbb{Z}^{r+1}$ and $0 < i < j \leq r$,
\begin{equation}
\label{eq:a.12} \Delta_{i,j}^{-1}(\mathbf{m}) \succ \mathbf{m} \succ \Delta_{i,j}(\mathbf{m}),
\end{equation}
and
\begin{equation}
\label{eq:a.11} \Delta_{i,j}^{-1}(\mathbf{m}) > \mathbf{m} > \Delta_{i,j}(\mathbf{m}).
\end{equation}

In the following discussion, by $\mathbf{m}^*$ we will always denote the element in $\mathcal{I}_A$ with the highest
degree, and always assume $A_{\mathbf{m}^*} = 1$ without loss of generality. This is possible as the coefficients in $A$ are rational functions
in $K$. 

For any $\mathbf{m}\in \mathbb{Z}^{r+1}$, define
\begin{equation}
\label{eq:a.13} \mathcal{P}(\mathbf{m}) = \{\mathbf{p}\in \mathcal{I}_A\ |\
\mathbf{p}\succ \mathbf{m}\ \mathrm{for\ some}\ 0< i<j\leq r\}
\end{equation}
 and
$\#(\mathbf{m}) = |\mathcal{P}(\mathbf{m})|$. 

We define a function $C:\mathbb{Z}^{r+1}\to \mathbb{Z}$ by
\begin{equation}
\label{eq:a.5} C({\mathbf{m}}) = \sum_{j=1}^r jm_j,
\end{equation}
where $\mathbf{m} = (m_0, m_1, \cdots, m_r)\in \mathbb{Z}^{r+1}$.  It is easy to verify that if
$\mathbf{m}\succ \mathbf{p}$, then $C(\mathbf{m}) >
C(\mathbf{p})$. In particularly,  
\begin{equation}
\label{eq:a.c}
 C(\mathbf{m}) - C(\Delta_{i,j}(\mathbf{m})) = i,\quad (0<i<j\leq r).
\end{equation}

\subsection{Preliminary Lemmas}

Now, we can start the proof process. First, following lemma is straightforward from the definition of nontrivial expansion.
\begin{lem}
\label{le:0}
Let $A\in K\{y\}$, the differential ideal $\Lambda = \{A, X\}$ is a nontrivial expansion of $\mathscr{X}$ if, and only if, the equation
\begin{equation}
\label{eq:xa}
\left\{
\begin{array}{rcl}
\mathscr{X}y &=& 0\\
\mathscr{D}_Ay &=& 0
\end{array}\right.
\end{equation}
has a non constant solution in $\mathcal{A}(\Omega)$, with $\Omega$ an open subset of $\mathbb{C}^2$.
\end{lem}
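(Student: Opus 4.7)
The plan is to prove both directions of the equivalence by directly unpacking the definition of a nontrivial expansion; the statement is essentially bookkeeping given the preliminary framework. Recall that, by definition, $\Lambda$ is a nontrivial expansion of $\mathscr{X}$ exactly when there is a non-constant $\omega\in\mathcal{A}(\Omega)$ with $\mathscr{D}_B\omega = 0$ on $\Omega\setminus S(\mathscr{D}_B)$ for every $B\in\Lambda$. The forward direction ($\Rightarrow$) is then immediate: specialize this universal statement to $B = X\in\Lambda$, which gives $\mathscr{X}\omega = \mathscr{D}_X\omega = 0$, and to $B = A\in\Lambda$, which gives $\mathscr{D}_A\omega = 0$. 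Thus $\omega$ is the required non-constant solution of the system \eqref{eq:xa}.

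For the reverse direction ($\Leftarrow$), suppose $\omega\in\mathcal{A}(\Omega)$ is non-constant with $\mathscr{X}\omega = 0$ and $\mathscr{D}_A\omega = 0$. I must upgrade these two vanishing conditions to $\mathscr{D}_B\omega = 0$ for every $B\in\Lambda = \{X,A\}$. By the characterization of $\{\Sigma\}$ recalled in the Preliminary definitions, each $B\in\{X,A\}$ is a finite $K\{y\}$-linear combination of $X$, $A$, and their iterated derivatives:
$$
B \;=\; \sum_i p_i\,\delta_1^{k_i}\delta_2^{l_i}X \;+\; \sum_j q_j\,\delta_1^{m_j}\delta_2^{n_j}A,\qquad p_i,q_j\in K\{y\}.
$$
So the task reduces to showing that every iterated derivative of $X$ or $A$ vanishes upon the substitution $y=\omega$.

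The key technical fact I would invoke is that substitution $y=\omega$ commutes with the derivations $\delta_1,\delta_2$ on $K\{y\}$, i.e.\ $(\delta_i F)|_{y=\omega} = \delta_i(F|_{y=\omega})$ for every $F\in K\{y\}$. This is the chain rule together with the tautology $(\delta_1^{a}\delta_2^{b}y)|_{y=\omega} = \delta_1^{a}\delta_2^{b}\omega$ built into the definition of $\mathscr{D}_F$ in \eqref{eq:oa}. Iterating yields $(\delta_1^{k}\delta_2^{l}X)|_{y=\omega} = \delta_1^{k}\delta_2^{l}(\mathscr{X}\omega) = 0$ and $(\delta_1^{m}\delta_2^{n}A)|_{y=\omega} = \delta_1^{m}\delta_2^{n}(\mathscr{D}_A\omega) = 0$. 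Multiplying each summand by $p_i|_{y=\omega}$ or $q_j|_{y=\omega}$ (rational in $x_1,x_2$ after the substitution) and summing preserves the vanishing, giving $\mathscr{D}_B\omega = 0$ wherever all coefficients are defined, i.e.\ on $\Omega\setminus S(\mathscr{D}_B)$.

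I do not expect a serious obstacle: the argument is essentially a syntactic check that substitution is a differential-ring homomorphism. The only modest care needed is to shrink $\Omega$ (or rather to remove from it the closed subset $S(\mathscr{D}_B)$) so that the finitely many rational coefficients appearing in each concrete $B$ are defined on the relevant open set; this is handled by the very definition of $\mathscr{D}_B$ used throughout the paper.
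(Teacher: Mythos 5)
Your proof is correct and takes essentially the same route as the paper, which in fact offers no argument at all beyond declaring the lemma ``straightforward from the definition of nontrivial expansion''; your unpacking (forward direction by specializing the universal condition to $B=X$ and $B=A$, reverse direction via the characterization of $\{X,A\}$ as $K\{y\}$-combinations of $X$, $A$ and their derivatives, plus the fact that substitution $y=\omega$ commutes with $\delta_1,\delta_2$) is exactly the intended content. The only point you share with the paper rather than improve on is that ``expansion'' formally also requires $\{A,X\}\supsetneq\{X\}$, which the stated equivalence quietly presupposes.
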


Following result is a straightforward conclusion from Lemma \ref{le:0}
\begin{lem}
\label{lem:a.0}
If there exist $a\in K$, non constant, such that $\mathscr{X}a  = 0$, then let
$$A = y - a,$$
the differential ideal $\Lambda = \{X,A\}$ is a nontrivial expansion of $\mathscr{X}$.
\end{lem}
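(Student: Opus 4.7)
The plan is to apply Lemma \ref{le:0} directly: it suffices to produce a non-constant analytic function satisfying both equations of the system \eqref{eq:xa}, and to verify that $\{X, A\}$ strictly contains $\{X\}$ so that it truly is an expansion.

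First I would observe that, since $A = y - a$, the associated operator acts as $\mathscr{D}_A u = u - a$, so the second equation in \eqref{eq:xa} forces the solution to be $\omega = a$. Because $a \in K$ is a rational function of $(x_1, x_2)$, it is analytic on $\Omega := \mathbb{C}^2 \setminus S(\mathscr{D}_A)$, where $S(\mathscr{D}_A)$ is the (closed) polar set of $a$. By hypothesis $a$ is non constant, so $\omega = a \in \mathcal{A}(\Omega)$ is a non-constant analytic function. It remains to check that the first equation $\mathscr{X}\omega = 0$ holds, but this is immediate from the hypothesis $\mathscr{X}a = 0$. Hence the system \eqref{eq:xa} admits the non-constant analytic solution $\omega = a$, and Lemma \ref{le:0} yields that $\{X, A\}$ is a nontrivial expansion, provided it is an expansion at all.

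The remaining point is to confirm that $\{X, A\}$ properly contains $\{X\}$, i.e., that $A \notin \{X\}$. Here I would appeal to the rank ordering: the leader of $X = X_1 \delta_1 y + X_2 \delta_2 y$ is $\delta_1 y$, whereas $A = y - a$ has leader $y$, which is strictly lower. Any element of $\{X\}$ is a $K\{y\}$-linear combination of $X$ and its derivatives $\delta_1^{i_1} \delta_2^{i_2} X$; each such derivative has leader at least $\delta_1 y$, so no nonzero combination can have leader $y$. Since $A \neq 0$, this shows $A \notin \{X\}$, and therefore $\{X, A\} \supsetneq \{X\}$.

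There is essentially no obstacle here; the lemma is a direct unpacking of the definition of nontrivial expansion combined with Lemma \ref{le:0}. The only point deserving a line of care is the verification that $A \notin \{X\}$, which is why I would mention the rank comparison explicitly rather than leave it implicit.
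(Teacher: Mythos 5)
Your proof is correct and follows exactly the route the paper intends: the paper states this lemma without proof as a ``straightforward conclusion from Lemma \ref{le:0},'' and your argument (taking $\omega = a$ as the non-constant analytic solution of \eqref{eq:xa}) is precisely that unpacking. The additional rank comparison showing $A \notin \{X\}$ is a sound and worthwhile detail that the paper leaves implicit.
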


\begin{lem}
\label{lem:a.3} If there exists $a\in K$, $a\not=0$, such that
\begin{equation}
\label{eq:a.8} \mathscr{X} a  = nb_0 a,
\end{equation}
where $n$ is non-zero integer, let 
\begin{equation}
A = (\delta_2y)^{|n|} - a^{|n|/n},
\end{equation}
then $\Lambda = \{X, A\}$ is a nontrivial expansion of $\mathscr{X}$.
\end{lem}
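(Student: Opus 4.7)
The plan is to invoke Lemma \ref{le:0}: it suffices to exhibit a non-constant analytic function $\omega$ on some open $\Omega \subset \mathbb{C}^2$ satisfying
\begin{equation*}
\mathscr{X}\omega = 0, \qquad (\delta_2\omega)^{|n|} = a^{|n|/n}.
\end{equation*}
My strategy is to first construct a candidate for $v := \delta_2\omega$ as a local branch of the multivalued algebraic function $a^{1/n}$ (taking $1/n$ to mean $-1/|n|$ when $n<0$), then to reconstruct $\omega$ from $v$ by integrating a Pfaffian system, showing that the hypothesis $\mathscr{X}a = n b_0 a$ is exactly what is needed for this system to be integrable.

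First I would pick an open simply connected $\Omega \subset \mathbb{C}^2$ disjoint from the zero and pole sets of $a$ and from $\{X_1 = 0\}$; on such $\Omega$ a single-valued branch $v$ with $v^n = a$ exists and is non-vanishing. A direct logarithmic differentiation together with the hypothesis $\mathscr{X}a = nb_0 a$ gives
\begin{equation*}
n v^{n-1}\,\mathscr{X}v = \mathscr{X}(v^n) = \mathscr{X}a = nb_0 a = nb_0 v^n,
\end{equation*}
so, dividing by $n v^{n-1} \neq 0$, we obtain the key identity $\mathscr{X}v = b_0 v$.

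Next I would aim to produce $\omega$ by solving the Pfaffian system
\begin{equation*}
\delta_2\omega = v, \qquad \delta_1\omega = -\tfrac{X_2}{X_1}\,v,
\end{equation*}
the second equation being forced by $\mathscr{X}\omega = 0$ once the first is imposed. The key step is to verify the Frobenius/mixed-partials integrability condition $\delta_1(\delta_2\omega) = \delta_2(\delta_1\omega)$. A short computation using $b_0 = -X_1\,\delta_2(X_2/X_1)$ rewrites this as
\begin{equation*}
\delta_1 v - \delta_2\!\Bigl(-\tfrac{X_2}{X_1}\,v\Bigr) = \tfrac{1}{X_1}\bigl(X_1\delta_1 v + X_2 \delta_2 v - b_0 v\bigr) = \tfrac{1}{X_1}\bigl(\mathscr{X}v - b_0 v\bigr),
\end{equation*}
which vanishes by the identity obtained above. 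By the Poincaré lemma on the simply connected $\Omega$ we therefore obtain an analytic $\omega$ realizing the Pfaffian system; since $\delta_2\omega = v \neq 0$, $\omega$ is non-constant. By construction $(\delta_2\omega)^{|n|} = v^{|n|} = a^{|n|/n}$, so $\mathscr{D}_A\omega = 0$, and the first equation $\mathscr{X}\omega = 0$ holds by definition of $\delta_1\omega$. Lemma \ref{le:0} then gives the desired nontrivial expansion.

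The only non-routine step is the algebraic identification of the integrability condition of the Pfaffian system with the condition $\mathscr{X}v = b_0 v$; once that is in place, the remaining work is choosing a good $\Omega$ so that the branch of $a^{1/n}$ is single-valued and Poincaré's lemma applies. Everything else is bookkeeping.
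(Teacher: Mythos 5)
Your proposal is correct and follows essentially the same route as the paper: both reduce to Lemma \ref{le:0}, set $u=a^{1/n}$ (your $v$), pair it with $-\tfrac{X_2}{X_1}u$, and check that the hypothesis $\mathscr{X}a=nb_0a$ makes the resulting 1-form closed so that $\omega$ can be recovered by integration. Your intermediate reformulation $\mathscr{X}v=b_0v$ and the explicit attention to choosing $\Omega$ so the branch is single-valued are just slightly more detailed packaging of the paper's direct computation $\delta_1u=\delta_2v$.
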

\begin{proof} From Lemma \ref{le:0}, we only need to show that there is a non constant solution of the differential equation
\begin{equation}
\label{eq:e1}
\left\{
\begin{array}{rcl}
X_1 \delta_1 y + X_2 \delta_2 y &=& 0\\
(\delta_2y)^{|n|} - a^{|n|/n} &=& 0.
\end{array}\right.
\end{equation} 

Let
$$u = a^{1/n},\quad v = -\dfrac{X_2}{X_1} u,$$
and taking account \eqref{eq:a.8}, direct calculations show that 
$$\delta_1 u = \dfrac{1}{X_1} (b_0 u - X_2 \delta_2 u) = \delta_2 v.$$
Thus, the  1-form $v dx_1 + u d x_2$ is closed, and therefore the function of form
$$\omega = \int_{(x_1^0, x_2^0)}^{(x_1,x_2)} v dx_1 + u dx_2$$
is well defined and analytic on a neighborhood of some $(x_1^0, x_2^0)\in \mathbb{C}^2$.  Further, 
$$\delta_1 \omega= v,\quad \delta_2 \omega= u.$$
It is easy to verify that $\omega$ satisfies equations \eqref{eq:e1}, and the Lemmas is proved.
\end{proof}

\begin{lem}
\label{lem:a.4}If there exists $a\in K$ satisfying
\begin{equation}
\label{eq:a.9} \mathscr{X} a = b_0 a  + b_1.
\end{equation}
Let
\begin{equation}
A = \delta_2^2y - a \delta_2y,
\end{equation}
then $\Lambda = \{X,A\}$ is a nontrivial expansion of $\mathscr{X}$.
\end{lem}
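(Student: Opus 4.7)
The plan is to mirror the strategy used for Lemma \ref{lem:a.3}: invoke Lemma \ref{le:0} to reduce the statement to the existence of a non-constant analytic $\omega$ on some open $\Omega\subset\mathbb{C}^2$ solving
\begin{equation*}
X_1\delta_1\omega + X_2\delta_2\omega = 0, \qquad \delta_2^2\omega - a\,\delta_2\omega = 0.
\end{equation*}
Setting $u=\delta_2\omega$, the second equation becomes $\delta_2 u = a\,u$, while the first forces $\delta_1\omega = -(X_2/X_1)\,u$. So the strategy is to first construct $u$, and then recover $\omega$ by integrating a closed 1-form, just as in Lemma \ref{lem:a.3}.

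First I would write down the overdetermined linear system that $u$ must satisfy. The relation $\delta_2 u = au$ is forced. Applying $\delta_1$ to the identity $\delta_1\omega = -(X_2/X_1)u$ and using $\delta_1\delta_2\omega=\delta_2\delta_1\omega$ yields the second equation
\begin{equation*}
\delta_1 u = \frac{b_0 - X_2 a}{X_1}\,u,
\end{equation*}
after using $\delta_2(X_2/X_1)=-b_0/X_1$. So $u$ must satisfy a pair of first-order linear PDEs whose coefficients are rational in $(x_1,x_2)$.

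Next I would verify the Frobenius-type compatibility condition $\delta_1\delta_2 u=\delta_2\delta_1 u$. A short computation, expanding $\delta_2\bigl((b_0-X_2a)/X_1\bigr)$ via the product rule and using $b_1 = X_1\delta_2(b_0/X_1)$, collapses this compatibility exactly to
\begin{equation*}
X_1\delta_1 a + X_2\delta_2 a = b_0 a + b_1,
\end{equation*}
which is precisely the hypothesis $\mathscr{X}a = b_0 a + b_1$. Thus the system is integrable, so on any simply connected open $\Omega$ avoiding the singular set of the coefficients there exists a nonzero analytic $u$ solving both equations (e.g.\ prescribe a nonzero value at one point and integrate along paths). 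The non-vanishing of $u$ is preserved because the system is linear homogeneous.

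Finally, define $v=-(X_2/X_1)u$. The identities $\delta_2 u = au$ and $\delta_1 u = ((b_0-X_2a)/X_1)u$ together imply $\delta_1 u = \delta_2 v$, so $v\,dx_1 + u\,dx_2$ is closed on $\Omega$ and the line integral
\begin{equation*}
\omega(x_1,x_2) = \int_{(x_1^0,x_2^0)}^{(x_1,x_2)} v\,dx_1 + u\,dx_2
\end{equation*}
defines an analytic function with $\delta_1\omega=v$, $\delta_2\omega=u$. Since $u\not\equiv 0$, $\omega$ is non-constant; by construction $\mathscr{X}\omega=X_1 v+X_2 u=0$ and $\delta_2^2\omega=\delta_2 u = a u = a\,\delta_2\omega$. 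Lemma \ref{le:0} then gives the conclusion. The main delicate point is the compatibility computation; it is routine but must be carried out carefully, as it is the only place where the hypothesis \eqref{eq:a.9} enters the argument.
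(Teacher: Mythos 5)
Your proposal is correct and follows essentially the same route as the paper: the linear compatible system $\delta_2 u = au$, $\delta_1 u = \bigl((b_0 - X_2 a)/X_1\bigr)u$ that you solve by Frobenius integrability is exactly the paper's construction $u = \exp(\eta)$ with $\eta$ a potential for the closed $1$-form $b\,dx_1 + a\,dx_2$, $b = (b_0 - X_2a)/X_1$, and your compatibility computation is the paper's verification that $\delta_1 a = \delta_2 b$ follows from \eqref{eq:a.9}. The recovery of $\omega$ from the closed form $v\,dx_1 + u\,dx_2$ is identical in both arguments.
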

\begin{proof}  Let 
$$b  = -\dfrac{X_2}{X_1} a + \dfrac{b_0}{X_1}.$$
From \eqref{eq:a.9}, we have
$$\delta_1 a = -\dfrac{X_2}{X_1} \delta_2 a - (\delta_2\dfrac{X_2}{X_1}) a + \delta_2 \dfrac{b_0}{X_1} = \delta_2 b.$$
Thus, the 1-form $bdx_1 + a dx_2$ is closed, and there exists a function $\eta$ that is  analytic on a neighborhood of some $(x_1^0,x_2^0)\in \mathbb{C}^2$, such that
$$\delta_1 \eta = b, \quad \delta_2 \eta = a.$$
Furthermore, we assume that $X_1(x_1^0,x_2^0)\not=0$. Let 
$u =  \exp(\eta)$, then $u$ is a non zero function, and 
$$
\mathscr{X}u = u (X_1 \delta_1 \eta + X_2 \delta_2 \eta)=u (X_1 b + X_2 a)  = b_0 u.
$$
Thus, following the proof of Lemma \ref{lem:a.3}, let
$$v = -\dfrac{X_2}{X_1}u,$$
then $v d x_1 + u d x_2$ is a closed 1-form, and the function 
$$\omega = \int_{(x_1^0,x_2^0)}^{(x_1,x_2)} vd x_1 + u dx_2$$
is well defined in a neighborhood of $(x_1^0,x_2^0)$ (we note that $X_1(x_1^0,x_2^0) \not=0$), non constant, and satisfies
$$X_1\delta_1\omega +X_2 \delta_2 \omega = 0,\quad \delta_2 \omega - u  = 0.$$
Therefore, 
$$X_1\delta_1\omega +X_2 \delta_2 \omega = 0, \quad \delta_2^2\omega - a \delta_2 u = 0.$$
Thus, the non constant function $\omega$ satisfies the equation
\begin{equation}
\left\{
\begin{array}{rcl}
X_1 \delta_1 y + X_2 \delta_2 y &=& 0 \\
\delta_2^2 y - a \delta_2 y &=& 0
\end{array}
\right.
\end{equation}
and hence the Lemma is concluded from Lemma \ref{le:0}.
\end{proof}

\begin{lem}
\label{lem:a.15}If there exists $a\in K$ satisfying
\begin{equation} 
\label{eq:a.10} 
\mathscr{X} a = 2 b_0 a  + b_2.
\end{equation}
Let
\begin{equation}
A = 2 (\delta_2 y) (\delta_2^3 y) - 3 (\delta_2^2 y)^2 - a (\delta_2 y)^2,
\end{equation}
then $\Lambda = \{X,A\}$ is a nontrivial expansion of $\mathscr{X}$.
\end{lem}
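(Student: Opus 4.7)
The plan is to follow the template of Lemmas \ref{lem:a.3} and \ref{lem:a.4}. By Lemma \ref{le:0}, it suffices to exhibit a non-constant analytic function $\omega$ on some open $\Omega \subset \mathbb{C}^2$ satisfying both $\mathscr{X}\omega = 0$ and $\mathscr{D}_A\omega = 0$. The second equation rearranges to $2(\delta_2^3\omega)/(\delta_2\omega) - 3(\delta_2^2\omega/\delta_2\omega)^2 = a$, i.e., twice the Schwarzian derivative of $\omega$ in the $x_2$ direction equals $a$. The classical linearization of the Schwarzian equation then suggests that if $\eta$ satisfies the linear second-order equation $\delta_2^2\eta + (a/4)\eta = 0$ in the $x_2$-direction, then $W := \eta^{-2}$ will serve locally as $\delta_2\omega$ for such an $\omega$.

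To also enforce $\mathscr{X}\omega = 0$, I would set $\delta_1\omega = -(X_2/X_1)W$ and $\delta_2\omega = W$. The required closedness of the 1-form $-(X_2/X_1)W\,dx_1 + W\,dx_2$ reduces, using $\delta_2(X_2/X_1) = -b_0/X_1$, to $\mathscr{X}W = b_0 W$, which in terms of $\eta$ reads $\mathscr{X}\eta = -(b_0/2)\eta$. Thus the entire problem collapses to producing a local analytic $\eta$ satisfying simultaneously
\begin{equation}
\mathscr{X}\eta = -(b_0/2)\eta, \qquad \delta_2^2\eta + (a/4)\eta = 0.
\end{equation}

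Setting $\xi = \delta_2\eta$, this becomes a $2\times 2$ linear Pfaffian system for $(\eta,\xi)$: the first equation supplies $\delta_1\eta$ as a $K$-linear combination of $\eta$ and $\xi$, the second supplies $\delta_2\xi = -(a/4)\eta$, and the cross-derivative relation $\delta_1\delta_2\eta = \delta_2\delta_1\eta$ determines $\delta_1\xi$. By the Frobenius theorem, a non-trivial analytic solution exists in a neighborhood of any point with $X_1 \neq 0$ provided the single remaining integrability condition $\delta_1\delta_2\xi = \delta_2\delta_1\xi$ is satisfied; a direct calculation, using the recursive relations $\delta_2(b_{i-1}/X_1) = b_i/X_1$, reduces this integrability condition to precisely the hypothesis $\mathscr{X}a = 2b_0 a + b_2$. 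Once such an $\eta$ is chosen with $\eta(x_1^0,x_2^0) \neq 0$, the now-closed 1-form $-(X_2/X_1)\eta^{-2}\,dx_1 + \eta^{-2}\,dx_2$ integrates to a non-constant analytic $\omega$, and a short substitution using $\delta_2\omega = \eta^{-2}$, $\delta_2^2\omega = -2\xi/\eta^3$, and $\delta_2^3\omega = (a/2)\eta^{-2} + 6\xi^2/\eta^4$ confirms $\mathscr{D}_A\omega = 0$.

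The main obstacle is the Frobenius integrability verification, which is the step that genuinely uses the hypothesis $\mathscr{X}a = 2b_0 a + b_2$; the remaining steps are straightforward extensions of the closed-1-form constructions already carried out in Lemmas \ref{lem:a.3} and \ref{lem:a.4}.
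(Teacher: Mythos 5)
Your construction is sound in substance but takes a genuinely different route from the paper. The paper does not linearize the Schwarzian: it works with the logarithmic derivative $u=\delta_2(\delta_2\omega)/(\delta_2\omega)$, which satisfies the \emph{nonlinear} Riccati-type Pfaffian system \eqref{eq:a.4}; it checks the compatibility $D_2f=D_1g$ from the hypothesis \eqref{eq:a.10}, and then must invoke a power-series/majorant existence result (Lemma~\ref{le:app} in the Appendix) because the system is nonlinear. Your substitution $\delta_2\omega=\eta^{-2}$ with $\delta_2^2\eta+q\eta=0$ is exactly the classical Riccati linearization of that system (the paper's $u$ equals $-2\delta_2\eta/\eta$ in your notation), and it buys a real simplification: for a \emph{linear} Pfaffian system the Frobenius theorem supplies analytic solutions through prescribed initial data directly, so the Appendix machinery is unnecessary. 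Your closing verification ($\delta_2\omega=\eta^{-2}$, $\delta_2^2\omega=-2\xi\eta^{-3}$, $\delta_2^3\omega=\tfrac{a}{2}\eta^{-2}+6\xi^2\eta^{-4}$) is correct as computed.

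The one concrete error is in the integrability computation you deferred. Carrying it out with $\delta_1\eta=-\tfrac{b_0}{2X_1}\eta-\tfrac{X_2}{X_1}\xi$, $\delta_2\xi=-q\eta$ and the relations $\delta_2(b_{i-1}/X_1)=b_i/X_1$, the $\xi$-component of $\delta_1\delta_2\xi=\delta_2\delta_1\xi$ holds identically, but the $\eta$-component forces $\mathscr{X}q=2b_0q+\tfrac12 b_2$; with your choice $q=a/4$ this reads $\mathscr{X}a=2b_0a+2b_2$, not the stated $\mathscr{X}a=2b_0a+b_2$. So under the stated hypothesis your system is integrable only if you instead take $q=a/2$, and then $2S(\omega)=4q=2a$, producing $2(\delta_2\omega)(\delta_2^3\omega)-3(\delta_2^2\omega)^2=2a(\delta_2\omega)^2$ rather than the coefficient $a$ appearing in $A$. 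You should not be too troubled: the paper's own proof carries the complementary mismatch (its identity $\eta\,\delta_2^2\eta=\tfrac32(\delta_2\eta)^2+a\eta^2$ likewise yields the coefficient $2a$ in the final display, not $a$), so the lemma as stated is itself off by the harmless rescaling $a\mapsto 2a$, which does not affect the classification. Still, your assertion that the Frobenius condition reduces ``precisely'' to \eqref{eq:a.10} is false by that factor of $2$, and the normalization of $q$ (or of $a$) must be adjusted for the argument to close.
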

\begin{proof}
We will show that there is a function $\omega$ that is analytic on an open subset of $\mathbb{C}^2$, non constant,  and satisfies
\begin{equation}
\label{eq:o3}
\left\{
\begin{array}{l}
X_1 \delta_1 \omega + X_2 \delta_2 \omega = 0\\
2 (\delta_2 \omega) (\delta_2^3 \omega) - 3 (\delta_2^2 \omega)^2 - a (\delta_2 \omega)^2 = 0.
\end{array}\right.
\end{equation}

Let
\begin{eqnarray*} 
f(x_1,x_2,u) &=& -\delta_2^2 \dfrac{X_2}{X_1} - \dfrac{X_2}{X_1} a - (\delta_2 \dfrac{X_2}{X_1}) u - \dfrac{1}{2} (\dfrac{X_2}{X_1}) u^2,\\
g(x_1,x_2, u) &=& a + \dfrac{1}{2} u^2.
\end{eqnarray*}
Then $f$ and $g$ are analytic at some point $(x_1^0, x_2^0, u^0)\in \mathbb{C}^3$. We further assume that $X_1(x_1^0, x_2^0)\not=0$. We will show that there is a function $u(x_1,x_2)$ that is analytic on a neighborhood of $(x_1^0,x_2^0)$,  and $u(x_1^0, x_2^0) = u^0$, such that 
\begin{equation}
\label{eq:a.4}
\left\{
\begin{array}{rcl}
\delta_1 u &=& f(x_1,x_2,u),\\
\delta_2 u &=& g(x_1,x_2,u)
\end{array}\right.
\end{equation}
is satisfied in a neighborhood of $(x_1^0,x_2^0)$.

It follows from \eqref{eq:a.10} that
$$\delta_1a = - \delta_2^3 \dfrac{X_2}{X_1} - \delta_2 (\dfrac{X_2}{X_1} a) - (\delta_2 \dfrac{X_2}{X_1}) a .$$
Thus, from the \eqref{eq:a.4}, we have
\begin{eqnarray*}
&&(\dfrac{\partial\ }{\partial x_2} + g(x_1,x_2,u)\dfrac{\partial\ }{\partial u} ) f(x_1,x_2,u)\\
&=&-\delta_2^3\frac{X_2}{X_1} - \delta_2(\frac{X_2}{X_1}a) -
(\delta_2^2\frac{X_2}{X_1}) u - \frac{1}{2}(\delta_2\frac{X_2}{X_1}) u^2\\ 
&&{} - g(x_1,x_2,u) (\delta_2\frac{X_2}{X_1}  +\frac{X_2}{X_1}u) \\
&=&-\delta_2^3\frac{X_2}{X_1} - \delta_2(\frac{X_2}{X_1}a) - (\delta_2\frac{X_2}{X_1}) a -
(\delta_2^2\frac{X_2}{X_1}) u - (\frac{X_2}{X_1}a) u\\
&&{} - (\delta_2\frac{X_2}{X_1}) u^2 - \frac{1}{2}(\frac{X_2}{X_1}) u^3,\\
&=& -\delta_2^3\frac{X_2}{X_1} - \delta_2(\frac{X_2}{X_1}a) - (\delta_2\frac{X_2}{X_1}) a + u f(x_1,x_2,u)\\
&=&\delta_1 a + u f(x_1,x_2,u)\\
&=&(\dfrac{\partial\ }{\partial x_1} + f(x_1,x_2,u)\dfrac{\partial\ }{\partial u}) g(x_1,x_2,u).
\end{eqnarray*}
Therefore, assuming
\begin{equation}
\label{eq:us}
u(x_1,x_2) = \sum_{i=0}^\infty\sum_{j=0}^\infty u_{i,j} (x_1-x_1^0)^i (x_2-x_2^0)^j,\quad (u_{0,0} = u^0)
\end{equation}
and applying the Method of Majorants, we can obtain the coefficients $u_{i,j}$ by induction, and the power series \eqref{eq:us} is convergent in a neighborhood of $(x_1^0,x_2^0)$ (refer Appendix for detail), which yields an analytic solution of \eqref{eq:a.4}.

Let $u$ to be the above solution of \eqref{eq:a.4}, and 
$$v = -\delta_2\frac{X_2}{X_1} - \frac{X_2}{X_1} u.$$
It is easy to verify $\delta_2 v = \delta_1 u$, and hence 
the 1-form $v dx_1 + u dx_2$ is closed. Let
\begin{equation}
\label{eq:eta}
\eta =
\exp\left[\int_{(x_1^0,x_2^0)}^{(x_1,x_2)} v dx_1 + u dx_2\right],
\end{equation}
then the function $\eta$ is well defined, non zero,  and analytic on a neighborhood of $(x_1^0, x_2^0)$ (here we note that $X_1(x_1^0, x_2^0) \not=0$), and
$$\mathscr{X}\eta = b_0 \eta.$$
Follow the proof of Lemma \ref{lem:a.3}, there exist a non constant function $\omega$, analytic on an a neighborhood of $(x_1^0, x_2^0)$ (we note that $b_0$ is analytic at $(x_1^0,x_2^0)$), such that
$$\mathscr{X}\omega = 0,\quad \delta_2 \omega = \eta.$$

From \eqref{eq:eta} and \eqref{eq:a.4}, we have $\delta_2 \eta = \eta u$, and
$$\eta \delta_2^2 \eta = \eta \left((\delta_2 \eta) u + \eta (a + \dfrac{1}{2} u^2)\right) = \dfrac{3}{2}(\delta_2 \eta)^2 + a \eta^2.$$
Taking account $\eta = \delta_2 \omega$, we have
$$(\delta_2 \omega) (\delta_2^3 \omega) - \dfrac{3}{2} (\delta_2^2 \omega)^2 - a (\delta_2 \omega)^2 = 0.$$
Thus, $\omega$ satisfies \eqref{eq:o3} and the Lemma is concluded.  
\end{proof}

\begin{lem}
\label{lem:a.1} Let $[\delta_2, \mathscr{X}]$ and $y_i$ defined as previous, then 
\begin{enumerate} 
\item[(1)] $[\delta_2, \mathscr{X}] =
(\frac{\delta_2 X_1}{X_1}) \mathscr{X} - b_0 \delta_2$; 
\item[(2)] $\mathscr{X}y_j =
\delta_2\mathscr{X}y_{j-1} - (\frac{\delta_2 X_1}{X_1}) \mathscr{X}y_{j-1} +
b_0\,y_j$.
\end{enumerate}
\end{lem}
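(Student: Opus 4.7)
The plan is to prove part (1) by a direct coefficient comparison in the basis $\{\delta_1,\delta_2\}$, and then deduce part (2) as a formal consequence of (1) applied to $y_{j-1}$.

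For part (1), I would start from the given formula
\[
[\delta_2,\mathscr{X}] = (\delta_2 X_1)\delta_1 + (\delta_2 X_2)\delta_2,
\]
expand the right-hand side
\[
\bigl(\tfrac{\delta_2 X_1}{X_1}\bigr)\mathscr{X} - b_0\delta_2 = (\delta_2 X_1)\delta_1 + \bigl(\tfrac{X_2\,\delta_2 X_1}{X_1} - b_0\bigr)\delta_2,
\]
and compare coefficients. The $\delta_1$ coefficients agree immediately, so the whole matter reduces to the identity
\[
\delta_2 X_2 = \frac{X_2\,\delta_2 X_1}{X_1} - b_0.
\]
This follows directly from the definition $b_0 = -X_1\,\delta_2(X_2/X_1)$ by applying the quotient rule:
\[
b_0 = -X_1\cdot\frac{(\delta_2 X_2)X_1 - X_2(\delta_2 X_1)}{X_1^2} = -\delta_2 X_2 + \frac{X_2\,\delta_2 X_1}{X_1},
\]
and rearranging. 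This is the only nontrivial step, and it is essentially a one-line quotient-rule calculation.

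For part (2), I would rewrite (1) in the equivalent operator form
\[
\mathscr{X}\delta_2 = \delta_2\mathscr{X} - \bigl(\tfrac{\delta_2 X_1}{X_1}\bigr)\mathscr{X} + b_0\,\delta_2,
\]
and apply both sides to $y_{j-1}$. Using $\delta_2 y_{j-1} = y_j$, the left-hand side becomes $\mathscr{X}y_j$, and the right-hand side becomes exactly
\[
\delta_2\mathscr{X}y_{j-1} - \bigl(\tfrac{\delta_2 X_1}{X_1}\bigr)\mathscr{X}y_{j-1} + b_0\,y_j,
\]
which is the claimed formula.

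There is no real obstacle here; the lemma is a bookkeeping identity whose only content is the verification that the function $b_0$ introduced earlier is precisely the right combination to convert the raw commutator $[\delta_2,\mathscr{X}]$ into a multiple of $\mathscr{X}$ plus a multiple of $\delta_2$. Once (1) is established, (2) is immediate by applying the operator identity to $y_{j-1}$, so the proof is short.
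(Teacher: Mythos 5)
Your proposal is correct and follows essentially the same route as the paper: part (1) is the same quotient-rule verification that $b_0=-X_1\,\delta_2(X_2/X_1)$ is exactly the coefficient making $[\delta_2,\mathscr{X}]=(\frac{\delta_2 X_1}{X_1})\mathscr{X}-b_0\delta_2$ (you merely read the computation in the opposite direction, expanding the target and comparing coefficients rather than massaging the commutator into the target), and part (2) is the identical application of the operator identity to $y_{j-1}$ using $\delta_2 y_{j-1}=y_j$. No gaps.
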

\begin{proof}
(1) is straightforward from
\begin{eqnarray*} [\delta_2,\mathscr{X}] &=& (\delta_2X_1) \delta_1 +
(\delta_2X_2) \delta_2\\
&=&\frac{\delta_2 X_1}{X_1} (X_1\,\delta_1 + X_2 \delta_2) -
\frac{X_2}{X_1} (\delta_2 X_1) \delta_2 + (\delta_2
X_2)\,\delta_2\\
&=&\frac{\delta_2 X_1}{X_1} \mathscr{X} + X_1\,\frac{X_1 \delta_2X_2 -
X_2\,\delta_2 X_1}{X_1^2}\delta_2\\
&=&\frac{\delta_2X_1}{X_1} \mathscr{X} - b_0\,\delta_2.
\end{eqnarray*}
(2) can be obtained by direct calculation as follows:
\begin{eqnarray*}
\mathscr{X}y_j &=& \mathscr{X}\delta_2 y_{i-1}\\
&=&\delta_2\mathscr{X}y_{j-1} - [\delta_2, \mathscr{X}]y_{j-1}\\
&=& \delta_2\mathscr{X}y_{j-1} - (\frac{\delta_2X_1}{X_1}\,\mathscr{X} -
b_0 \delta_2)y_{j-1}\\
&=& \delta_2\mathscr{X}y_{j-1} - \frac{\delta_2X_1}{X_1}\,\mathscr{X}y_{j-1} +
b_0 \delta_2y_{j-1}\\
&=&\delta_2\mathscr{X}y_{j-1} - (\frac{\delta_2 X_1}{X_1})\,\mathscr{X}y_{j-1} +
b_0 y_j.
\end{eqnarray*}
\end{proof}

\begin{lem}
\label{lem:a.2} We have
\begin{equation} \label{eq:a.14} 
\mathscr{X}y_j \sim
\sum_{i = 0}^{j-1}c_{i,j}\,b_i\,y_{j-i},\quad (j\geq 1)
\end{equation}
 where $c_{i,j}$ are positive integers, and $c_{0,j} = j$.
\end{lem}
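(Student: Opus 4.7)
The plan is to prove the lemma by induction on $j \geq 1$, with Lemma \ref{lem:a.1}(2) serving as the driving recursion. The key algebraic fact needed is that, since $b_i = X_1\delta_2(b_{i-1}/X_1)$, expanding the quotient rule gives the identity $\delta_2 b_i = b_{i+1} + b_i(\delta_2 X_1)/X_1$ for every $i \geq 0$. This will be what makes the extraneous $(\delta_2 X_1)/X_1$ terms cancel cleanly.

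For the base case $j = 1$, Lemma \ref{lem:a.1}(2) reads $\mathscr{X}y_1 = \delta_2\mathscr{X}y_0 - (\delta_2 X_1/X_1)\mathscr{X}y_0 + b_0 y_1$. Since $\mathscr{X}y_0 = X \in \{X\}$ and $\{X\}$ is a differential ideal closed under multiplication by elements of $K\{y\}$, both $\delta_2 X$ and $(\delta_2 X_1/X_1)X$ lie in $\{X\}$, yielding $\mathscr{X}y_1 \sim b_0 y_1$, which matches $c_{0,1} = 1$.

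For the inductive step, I would write the inductive hypothesis as $\mathscr{X}y_{j-1} = \sum_{i=0}^{j-2} c_{i,j-1}\, b_i\, y_{j-1-i} + R$ with $R \in \{X\}$ and $c_{0,j-1} = j-1$. Substituting into Lemma \ref{lem:a.1}(2), and using the fact that $\delta_2 R \in \{X\}$ and $(\delta_2 X_1/X_1) R \in \{X\}$, the congruence modulo $\{X\}$ becomes
\[
\mathscr{X}y_j \sim \delta_2\Bigl(\sum_i c_{i,j-1} b_i y_{j-1-i}\Bigr) - \tfrac{\delta_2 X_1}{X_1}\sum_i c_{i,j-1} b_i y_{j-1-i} + b_0 y_j.
\]
Applying the Leibniz rule and substituting $\delta_2 b_i = b_{i+1} + b_i(\delta_2 X_1)/X_1$ makes the $(\delta_2 X_1)/X_1$ contributions cancel exactly, leaving $\mathscr{X}y_j \sim b_0 y_j + \sum_i c_{i,j-1} b_{i+1} y_{j-1-i} + \sum_i c_{i,j-1} b_i y_{j-i}$. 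Re-indexing the first sum via $i \mapsto i-1$ and matching coefficients of $b_i y_{j-i}$ yields the recursion $c_{0,j} = c_{0,j-1} + 1$, $c_{i,j} = c_{i-1,j-1} + c_{i,j-1}$ for $1 \leq i \leq j-2$, and $c_{j-1,j} = c_{j-2,j-1}$. Inductively $c_{0,j} = j$ and all other $c_{i,j}$ are sums (or equalities) of positive integers, hence positive integers.

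The main obstacle is purely bookkeeping: correctly tracking the two sums produced by the Leibniz expansion, verifying that the $(\delta_2 X_1)/X_1$ contributions cancel, and re-indexing so the combined expression is once again a sum over $b_i y_{j-i}$. Once the identity $\delta_2 b_i = b_{i+1} + b_i(\delta_2 X_1)/X_1$ is established, the remaining computation is routine.
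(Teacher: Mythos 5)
Your proposal is correct and follows essentially the same route as the paper: induction on $j$ driven by Lemma \ref{lem:a.1}(2), with the identity $\delta_2 b_i - b_i(\delta_2 X_1)/X_1 = X_1\delta_2(b_i/X_1) = b_{i+1}$ producing exactly the cancellation and the recursion $c_{0,j}=c_{0,j-1}+1$, $c_{i,j}=c_{i-1,j-1}+c_{i,j-1}$, $c_{j-1,j}=c_{j-2,j-1}$ that the paper obtains. Your explicit remark that $\sim$ is preserved under $\delta_2$ and under multiplication by elements of $K$ is a point the paper uses only implicitly.
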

\begin{proof} From Lemma \ref{lem:a.1}, when $j = 1$, we have
$$
\mathscr{X}y_1 = \delta_2\mathscr{X}y_0 - (\frac{\delta_2 X_1}{X_1})\mathscr{X}y_0 + b_0 y_1 \sim b_0 y_1.
$$
Thus \eqref{eq:a.14} holds for $j=1$ with $c_{0,1} = 1$. 

Assume that \eqref{eq:a.14} is valid for $j = k$ with positive integer coefficients $c_{i,k}$, and $c_{0,k}=k$, applying
 Lemma \ref{lem:a.1}, we have
\begin{eqnarray*}
\mathscr{X}y_{k+1} &=& \delta_2\mathscr{X}y_k - (\frac{\delta_2
X_1}{X_1})\,\mathscr{X}y_k + b_0\,y_{k+1}\\
&\sim&\delta_2(\sum_{i=0}^{k-1}c_{i,k}b_i y_{k-i}) -
(\frac{\delta_2 X_1}{X_1}) (\sum_{i=0}^{k-1}c_{i,k}b_i y_{k-i}) + b_0\,y_{k+1}\\
&=&\sum_{i=0}^{k-1}c_{i,k} ((\delta_2b_i) y_{k-i} + b_i\,\delta_2y_{k-i}) -
\sum_{i=0}^{k-1}c_{i,k}\frac{\delta_2 X_1}{X_1} b_i y_{k-i} + b_0y_{k+1}\\
&=&\sum_{i=0}^{k-1}c_{i,k}\left((\delta_2b_i - \frac{\delta_2
X_1}{X_1} b_i) y_{k-i} + b_i y_{k-i+1}\right) + b_0\,y_{k+1}\\
&=&(c_{0,k} + 1) b_0\,y_{k+1} + \sum_{i = 0}^{k-2}\left(c_{i,k}
X_1 \delta_2(\frac{b_i}{X_1}) + c_{i+1,k}\,b_{i+1}\right)y_{k-i}\\
&&{} + c_{k-1,k} X_1\delta_2(\frac{b_{k-1}}{X_1}) y_1\\
&=&(c_{0,k} + 1) b_0 y_{k+1} + \sum_{i = 0}^{k-2}(c_{i, k}
 + c_{i+1, k}) b_{i+1}y_{k-i} + c_{k-1, k} b_{k} y_1.
\end{eqnarray*}
Thus, let
$$\left\{\begin{array}{ll}
c_{0,k+1} = c_{0,k} + 1 = k+1,& \\
c_{i,k+1} = c_{i-1,k} + c_{i,k},&\ \ (1\leq i\leq k-1),\\
c_{k,k+1} = c_{k-1,k},&
\end{array}\right.$$
which are positive integers,  we have 
$$\mathscr{X}y_{k+1} \sim  \sum_{i = 0}^kc_{i,k+1} b_i y_{k + 1 - i},$$
and the  Lemma is proved.
\end{proof}

\begin{lem}
\label{lem:a.6} We have
\begin{equation}
\mathscr{X}a_{\mathbf{m}}\mathbf{y}^\mathbf{m}\sim (\mathscr{X}a_{\mathbf{m}}  + C({\mathbf{m}}) b_0
a_{\mathbf{m}})\mathbf{y}^{\mathbf{m}} + \sum_{i =
1}^{r-1}\sum_{j=i+1}^rm_jc_{i,j}b_ia_\mathbf{m}\mathbf{y}^{\Delta_{i,j}(\mathbf{m})},
\end{equation}
where $a_\mathbf{m}\in K$,  $\mathbf{y}^{\mathbf{m}} = y_0^{m_0}y_1^{m_1}\cdots y_r^{m_r}$, and $c_{i,j}$ is defined as in Lemma \ref{lem:a.2}.
\end{lem}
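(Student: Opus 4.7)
The plan is to prove Lemma \ref{lem:a.6} by a straightforward Leibniz expansion of $\mathscr{X}(a_{\mathbf{m}}\mathbf{y}^{\mathbf{m}})$, followed by a bookkeeping argument that identifies each resulting term with a monomial of the form $\mathbf{y}^{\Delta_{i,j}(\mathbf{m})}$.

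First I would write
$$\mathscr{X}(a_{\mathbf{m}}\mathbf{y}^{\mathbf{m}}) = (\mathscr{X}a_{\mathbf{m}})\,\mathbf{y}^{\mathbf{m}} + a_{\mathbf{m}}\sum_{j=0}^{r} m_j\,y_0^{m_0}\cdots y_j^{m_j-1}\cdots y_r^{m_r}\,\mathscr{X}y_j,$$
since $\mathscr{X}$ is a derivation on $K\{y\}$. The $j=0$ term contributes $m_0\,\mathbf{y}^{\mathbf{m}-\mathbf{e}_0}\,X$, which lies in $\{X\}$ and therefore vanishes under $\sim$. For the remaining terms, I would invoke Lemma \ref{lem:a.2} to substitute $\mathscr{X}y_j \sim \sum_{i=0}^{j-1}c_{i,j}b_i y_{j-i}$ for every $j\ge 1$, which yields
$$\mathscr{X}(a_{\mathbf{m}}\mathbf{y}^{\mathbf{m}}) \sim (\mathscr{X}a_{\mathbf{m}})\mathbf{y}^{\mathbf{m}} + a_{\mathbf{m}}\sum_{j=1}^{r}\sum_{i=0}^{j-1} m_j\,c_{i,j}\,b_i\,\mathbf{y}^{\mathbf{m}-\mathbf{e}_j}\,y_{j-i}.$$

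Next I would observe the key combinatorial identity $\mathbf{y}^{\mathbf{m}-\mathbf{e}_j}\,y_{j-i} = \mathbf{y}^{\mathbf{m}-\mathbf{e}_j+\mathbf{e}_{j-i}} = \mathbf{y}^{\Delta_{i,j}(\mathbf{m})}$, which is immediate from the definition of $\Delta_{i,j}$. Split the double sum according to whether $i=0$ or $i\ge 1$. In the $i=0$ case, $\Delta_{0,j}(\mathbf{m})=\mathbf{m}$, and using $c_{0,j}=j$ from Lemma \ref{lem:a.2} together with the definition \eqref{eq:a.5}, the $i=0$ contribution collapses to
$$a_{\mathbf{m}}\,b_0\,\mathbf{y}^{\mathbf{m}}\sum_{j=1}^{r} j\,m_j = C(\mathbf{m})\,b_0\,a_{\mathbf{m}}\,\mathbf{y}^{\mathbf{m}}.$$
For the $i\ge 1$ part, I would swap the order of summation: the index set $\{(i,j):1\le i\le j-1,\ 1\le j\le r\}$ equals $\{(i,j):1\le i\le r-1,\ i+1\le j\le r\}$, so this part becomes $\sum_{i=1}^{r-1}\sum_{j=i+1}^{r} m_j\,c_{i,j}\,b_i\,a_{\mathbf{m}}\,\mathbf{y}^{\Delta_{i,j}(\mathbf{m})}$.

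Combining the two pieces yields the claimed formula. There is no real obstacle here; the proof is a direct computation, and the only subtlety is keeping track of the index shift $\mathbf{m}-\mathbf{e}_j+\mathbf{e}_{j-i}=\Delta_{i,j}(\mathbf{m})$ and the re-indexing of the double sum. All residual terms arising from expanding $\mathscr{X}y_j$ are absorbed by the $\sim$ relation through Lemma \ref{lem:a.2}.
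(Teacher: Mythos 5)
Your proposal is correct and follows essentially the same route as the paper: a Leibniz expansion of $\mathscr{X}(a_{\mathbf{m}}\mathbf{y}^{\mathbf{m}})$, substitution of $\mathscr{X}y_j$ via Lemma \ref{lem:a.2}, and separation of the $i=0$ terms using $c_{0,j}=j$ to produce $C(\mathbf{m})b_0a_{\mathbf{m}}\mathbf{y}^{\mathbf{m}}$, with the remaining terms reindexed as $\mathbf{y}^{\Delta_{i,j}(\mathbf{m})}$. Your explicit remark that the $j=0$ term lies in $\{X\}$ and is absorbed by $\sim$ is a small clarification the paper leaves implicit, but the argument is the same.
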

\begin{proof} It is easy to have
$$\mathscr{X}a_{\mathbf{m}}\mathbf{y}^{\mathbf{m}} = (\mathscr{X}a_{\mathbf{m}}) \mathbf{y}^{\mathbf{m}}+ a_{\mathbf{m}}\sum_{j=0}^r \dfrac{\partial \mathbf{y}^{\mathbf{m}}}{\partial y_j}\mathscr{X}y_j.$$
From Lemma \ref{lem:a.2}, we have
\begin{eqnarray*}
\mathscr{X}a_{\mathbf{m}}\mathbf{y}^{\mathbf{m}} &=&(\mathscr{X}a_\mathbf{m})\mathbf{y}^{\mathbf{m}} + a_\mathbf{m} \sum_{j=0}^r m_j \mathbf{y}^{\mathbf{m}-\mathbf{e}_j} \mathscr{X}y_i\\
&\sim&(\mathscr{X}a_\mathbf{m})\mathbf{y}^{\mathbf{m}} + a_\mathbf{m}\,\sum_{j=1}^r m_j \mathbf{y}^{\mathbf{m}-\mathbf{e}_j}(\sum_{i=0}^{j-1}c_{i,j} b_i y_{j-i})\\
&=&(\mathscr{X}a_\mathbf{m})\mathbf{y}^{\mathbf{m}} +
a_\mathbf{m} b_0(\sum_{j = 1}^rc_{0, j} m_j)\mathbf{y}^{\mathbf{m}} + a_\mathbf{m}\sum_{j=1}^r\sum_{i = 1}^{j-1}m_jc_{i,j}b_i \mathbf{y}^{\mathbf{m} + \mathbf{e}_{j-i} -\mathbf{e}_j}\\
&=&(\mathscr{X} a_{\mathbf{m}} + C({\mathbf{m}})\, b_0 a_{\mathbf{m}})\,\mathbf{y}^{\mathbf{m}} + \sum_{i =
1}^{r-1}\sum_{j=i+1}^r m_j c_{i,j} b_i a_\mathbf{m}\,\mathbf{y}^{\Delta_{i,j}(\mathbf{m})},
\end{eqnarray*}
and the Lemma is concluded.
\end{proof}

\begin{lem}
\label{lem:a.9} Let $\Lambda$ be a nontrivial expansion of $\mathscr{X}$, $A\in \Lambda$ with the lowest rank and $r =
\mathrm{ord}(\Lambda) $. Let $\mathbf{m}^*\in \mathcal{I}_A$ with the
highest degree and assume that $a_{\mathbf{m}^*} = 1$, then for any $\mathbf{m}\in \mathbb{Z}^{r+1}$, $\mathbf{m}<\mathbf{m}^*$, we have
\begin{equation}
\label{eq:a.2} \mathscr{X}a_\mathbf{m} = (C({\mathbf{m}^*}) -
C({\mathbf{m}}))b_0a_{\mathbf{m}}
-\sum_{i=1}^{r-1}\sum_{j=i+1}^{r}(m_j+1)c_{i,j}b_ia_{{\Delta_{i,j}^{-1}}(\mathbf{m})}.
\end{equation}
Here $a_{\mathbf{m}} = 0$ whenever  $\mathbf{m}\not\in \mathcal{I}_A$.
\end{lem}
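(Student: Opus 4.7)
The plan is to apply Lemma~\ref{lem:a.6} termwise to $A$, form $R:=\mathscr{X}A-C(\mathbf{m}^*)b_0 A\in\Lambda$, use the minimality of $A$'s rank to conclude that $R$ reduces to zero modulo $\{X\}$, and then read off the coefficients of monomials to obtain \eqref{eq:a.2}.

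I would first write $A=\sum_{\mathbf{m}\in\mathcal{I}_A}a_{\mathbf{m}}\mathbf{y}^{\mathbf{m}}$ and apply Lemma~\ref{lem:a.6} to each term to get
\begin{equation*}
\mathscr{X}A \sim \sum_{\mathbf{m}}(\mathscr{X}a_{\mathbf{m}}+C(\mathbf{m})b_0 a_{\mathbf{m}})\mathbf{y}^{\mathbf{m}}+\sum_{\mathbf{m}}\sum_{i=1}^{r-1}\sum_{j=i+1}^{r}m_j c_{i,j}b_i a_{\mathbf{m}}\mathbf{y}^{\Delta_{i,j}(\mathbf{m})}.
\end{equation*}
Regrouping by monomials $\mathbf{y}^{\mathbf{n}}$ and noting that the indirect contributions to $\mathbf{y}^{\mathbf{n}}$ come from $\mathbf{m}=\Delta_{i,j}^{-1}(\mathbf{n})$ (with $m_j$ replaced by $n_j+1$), the coefficient of $\mathbf{y}^{\mathbf{n}}$ in the reduced form of $\mathscr{X}A$ equals
\begin{equation*}
\mathscr{X}a_{\mathbf{n}}+C(\mathbf{n})b_0 a_{\mathbf{n}}+\sum_{i=1}^{r-1}\sum_{j=i+1}^{r}(n_j+1)c_{i,j}b_i a_{\Delta_{i,j}^{-1}(\mathbf{n})}.
\end{equation*}

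I would then form $R:=\mathscr{X}A-C(\mathbf{m}^*)b_0 A$, which lies in $\Lambda$ because $\mathscr{X}A=X_1\delta_1 A+X_2\delta_2 A\in\Lambda$ and $A\in\Lambda$. Denote by $R_{\mathrm{red}}\in K[y_0,\ldots,y_r]$ its reduction modulo $\{X\}$, which again lies in $\Lambda$. Using $a_{\mathbf{m}^*}=1$, $\mathscr{X}(1)=0$, and the fact that $\Delta_{i,j}^{-1}(\mathbf{m}^*)>\mathbf{m}^*$ is not in $\mathcal{I}_A$, the coefficient of $\mathbf{y}^{\mathbf{m}^*}$ in $R_{\mathrm{red}}$ reduces to $C(\mathbf{m}^*)b_0-C(\mathbf{m}^*)b_0=0$, so every monomial appearing in $R_{\mathrm{red}}$ has degree strictly below $\mathbf{m}^*$.

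It remains to show $R_{\mathrm{red}}=0$. When the leader of $R_{\mathrm{red}}$ is below $y_r$ or its $y_r$-degree is less than $m^*_r$, the minimality of $A$'s rank in $\Lambda$ forces this immediately. The main obstacle is the remaining case in which $R_{\mathrm{red}}$ shares leader $y_r$ and $y_r$-degree $m^*_r$ with $A$; I would handle it by Ritt-style pseudo-division, writing $I_A R_{\mathrm{red}}=\beta A+R'$ with $\deg_{y_r}R'<m^*_r$, where $I_A$ and $\beta$ denote the $y_r^{m^*_r}$-coefficients of $A$ and $R_{\mathrm{red}}$ respectively, deducing $R'=0$ from minimality (since $R'\in\Lambda$ has rank strictly lower than $A$), and then combining the resulting identity $I_A R_{\mathrm{red}}=\beta A$ with the vanishing of the $\mathbf{y}^{\mathbf{m}^*}$-coefficient of $R_{\mathrm{red}}$, using unique factorization in $K[y_0,\ldots,y_r]$, to force $\beta=0$ and hence $R_{\mathrm{red}}=0$. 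Extracting the coefficient of $\mathbf{y}^{\mathbf{n}}$ for each $\mathbf{n}<\mathbf{m}^*$ from $R_{\mathrm{red}}=0$ then yields \eqref{eq:a.2}, with the convention $a_{\mathbf{n}}=0$ when $\mathbf{n}\notin\mathcal{I}_A$.
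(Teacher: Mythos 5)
Your overall route is the same as the paper's: expand $\mathscr{X}A$ termwise with Lemma \ref{lem:a.6}, subtract $C(\mathbf{m}^*)b_0A$, observe that the reduced remainder $R$ lies in $\Lambda$ and has all monomials strictly below $\mathbf{m}^*$, invoke minimality of $A$'s rank to get $R=0$, and read off coefficients; up to that point your computation agrees with the paper's. The divergence is in how the vanishing of the reduced remainder is justified: the paper simply asserts that $R$ ``has lower rank than $A$'' (which is itself not immediate, since a monomial $\mathbf{n}<\mathbf{m}^*$ can still have $n_r=m_r^*$, so $R_{\mathrm{red}}$ could share both the leader $y_r$ and the degree $m_r^*$ in it), whereas you correctly isolate this problematic sub-case and attack it by pseudo-division. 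Recognizing the issue is to your credit.

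However, your resolution of that sub-case does not work. From $I_AR_{\mathrm{red}}=\beta A$ together with the vanishing of the $\mathbf{y}^{\mathbf{m}^*}$-coefficient of $R_{\mathrm{red}}$, unique factorization does not force $\beta=0$: the quotient $\beta/I_A$ is an element of $K(y_0,\dots,y_{r-1})$, not of $K$, so it can absorb the top coefficient. Concretely, take $r=1$ and $A=(y_0+1)(y_1^2+c)$ with $c\in K$, so that $\mathbf{m}^*=(1,2)$, $a_{\mathbf{m}^*}=1$, $I_A=y_0+1$; then $R_{\mathrm{red}}=y_1^2+c$ satisfies $I_AR_{\mathrm{red}}=\beta A$ with $\beta=1\neq 0$, every monomial of $R_{\mathrm{red}}$ is strictly below $\mathbf{m}^*$, and its $\mathbf{y}^{\mathbf{m}^*}$-coefficient vanishes. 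So the purely algebraic implication you rely on is false, and some further input is needed. One clean fix: among all elements of $\Lambda$ of lowest rank (normalized to have top coefficient $1$), choose $A$ so that its highest index $\mathbf{m}^*$ is minimal in the (well-founded) degree order; then a nonzero $R_{\mathrm{red}}$ of the same rank with all monomials below $\mathbf{m}^*$ would contradict that minimality, while a nonzero $R_{\mathrm{red}}$ of lower rank contradicts rank minimality, so $R_{\mathrm{red}}=0$ follows without pseudo-division. (The paper's own proof glosses over exactly this point, so the gap is shared; but the patch you propose does not close it.)
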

\begin{proof}
We can write 
$$A = \sum_{\mathbf{m}\in \mathcal{I}_A}a_{\mathbf{m}}\mathbf{y}^{\mathbf{m}} = \sum_{\mathbf{m} \leq \mathbf{m}^*} a_{\mathbf{m}} \mathbf{y}^{\mathbf{m}}.$$
Hereinafter $a_{\mathbf{m}} = 0$ if $\mathbf{m}\not\in \mathcal{I}_A$. 

First, it is easy to have 
$$\mathscr{X}A = X_1 \delta_1A + X_2 \delta_2 A \in \Lambda.$$ 
On the other hand, from Lemma \ref{lem:a.6}, we have
\begin{eqnarray*}
\mathscr{X}A&=&\sum_{\mathbf{m}\leq \mathbf{m}^*}\mathscr{X}a_{\mathbf{m}}\mathbf{y}^{\mathbf{m}}\\
&\sim&\sum_{\mathbf{m}\leq \mathbf{m}^*}\left((\mathscr{X}a_{\mathbf{m}} +
C({\mathbf{m}})b_0a_{\mathbf{m}})\mathbf{y}^{\mathbf{m}}  +
\sum_{i=1}^{r-1}\sum_{j = i+1}^{r}m_jc_{i,j} b_i a_{\mathbf{m}}\mathbf{y}^{\Delta_{i,j}(\mathbf{m})}\right)\\
&=&\sum_{\mathbf{m}\leq \mathbf{m}^*}\left(\mathscr{X}a_{\mathbf{m}} +
C({\mathbf{m}})b_0a_{\mathbf{m}} + \sum_{i=1}^{r-1}\sum_{j =
i+1}^{r}(m_j+1)c_{i,j}b_ia_{{\Delta_{i,j}^{-1}}(\mathbf{m})}\right)\mathbf{y}^{\mathbf{m}}
\end{eqnarray*}
Note that for any $j>i$, $\Delta_{i,j}^{-1}(\mathbf{m}^*) > \mathbf{m}^*$,  and
thus $\Delta_{i,j}^{-1}(\mathbf{m}^*) \not\in \mathcal{I}_A$, i.e., $a_{\Delta_{i,j}^{-1}(\mathbf{m}^*)}
= 0$ for any $j>i$. Taking account $a_{\mathbf{m}^*} = 1$, we have $\mathscr{X}a_{\mathbf{m}^*} =0$, and hence
\begin{eqnarray*}
\mathscr{X}A &\sim& C(\mathbf{m}^*)b_0 y^{\mathbf{m}^*} \\
&&{} +  \sum_{\mathbf{m}<\mathbf{m}^*}\left(\mathscr{X}a_{\mathbf{m}} +
C({\mathbf{m}})b_0a_{\mathbf{m}} + \sum_{i=1}^{r-1}\sum_{j =
i+1}^{r}(m_j+1)c_{i,j}b_ia_{{\Delta_{i,j}^{-1}}(\mathbf{m})}\right)\mathbf{y}^{\mathbf{m}}.
\end{eqnarray*}
Therefore, 
\begin{equation}
\label{eq:l58}
\mathscr{X}A - C({\mathbf{m}^*})b_0A  \sim R = \sum_{\mathbf{m} < \mathbf{m}^*} f_{\mathbf{m}} \mathbf{y}^{\mathbf{m}}, 
\end{equation}
where the coefficients $f_{\mathbf{m}}$ are
\begin{equation}
\label{eq:l58-1}
f_{\mathbf{m}} =  \mathscr{X}a_\mathbf{m}
+ (C({\mathbf{m}}) - C({\mathbf{m}^*}))b_0a_{\mathbf{m}} +
\sum_{i=1}^{r-1}\sum_{j=i+1}^{r-1}(m_j+1)c_{i,j}b_ia_{{\Delta_{i,j}^{-1}}(\mathbf{m})}.
\end{equation}

Now, we obtain a differential polynomial $R$ that has lower rank than $A$ and is contained in the differential ideal
$\Lambda$. But $A$ is an element in $\Lambda$ with the lowest rank. Thus, we must have  
$R\equiv 0$. Therefore the coefficients \eqref{eq:l58-1} are zero, from which  \eqref{eq:a.2} is concluded. The Lemma has been proved.
\end{proof}

Note that $a_{\mathbf{m}^*} = 1$ and $\Delta_{i,j}^{-1}(\mathbf{m}^*) \not\in\mathcal{I}_A$,  the equation \eqref{eq:a.2} is also valid for $a_{\mathbf{m}^*}$.

The equation \eqref{eq:a.2} can be rewritten in another form as follows.
\begin{lem}
\label{lem:a.8} In Lemma \ref{lem:a.9}, for any $\mathbf{m}\leq \mathbf{m}^*$, let $k=\#(\mathbf{m})$ and $\mathcal{P}(\mathbf{m}) = \{\mathbf{p}_1,\cdots,
\mathbf{p}_k\}$, and assume $\Delta_{{i_l},{j_l}}(\mathbf{p}_l) = \mathbf{m},\
(l = 1,2,\cdots,k)$, then the coefficients $a_{\mathbf{p}_l},
a_{\mathbf{m}}$ satisfy
\begin{equation}
\label{eq:a.3} \mathscr{X} a_{\mathbf{m}} = (C({\mathbf{m}^*}) -
C({\mathbf{m}}))b_0a_{\mathbf{m}} -\sum_{l = 1}^{\#(\mathbf{m})}
(m_{j_l}+1) c_{i_l,j_l} b_{i_l} a_{\mathbf{p}_l}.
\end{equation}
\end{lem}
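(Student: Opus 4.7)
The plan is to derive Lemma \ref{lem:a.8} as essentially a re-indexing of the identity \eqref{eq:a.2} established in Lemma \ref{lem:a.9}. Since \eqref{eq:a.2} already expresses $\mathscr{X}a_{\mathbf{m}}$ as a linear combination of $a_{\mathbf{m}}$ and the $a_{\Delta_{i,j}^{-1}(\mathbf{m})}$'s (with the convention that $a_{\mathbf{n}} = 0$ for $\mathbf{n}\notin \mathcal{I}_A$), I only need to re-express the double sum in \eqref{eq:a.2} as a single sum running over the non-vanishing terms.

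First I would observe that for fixed $\mathbf{m}$, a pair $(i,j)$ with $0<i<j\leq r$ contributes a non-zero term to the double sum in \eqref{eq:a.2} precisely when $\Delta_{i,j}^{-1}(\mathbf{m})\in \mathcal{I}_A$; by the definition \eqref{eq:a.13}, this is equivalent to $\mathbf{p} := \Delta_{i,j}^{-1}(\mathbf{m}) \in \mathcal{P}(\mathbf{m})$ with $\Delta_{i,j}(\mathbf{p}) = \mathbf{m}$. Next I would verify that the map $(i,j) \mapsto \Delta_{i,j}^{-1}(\mathbf{m})$, restricted to pairs giving an element of $\mathcal{I}_A$, is a bijection onto $\mathcal{P}(\mathbf{m})$. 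The inverse is read off from the difference vector: given $\mathbf{p}\in \mathcal{P}(\mathbf{m})$, the equality $\mathbf{p} - \mathbf{m} = \mathbf{e}_{j} - \mathbf{e}_{j-i}$ has a unique $+1$-coordinate (determining $j$) and a unique $-1$-coordinate (determining $j-i$, and hence $i$). This confirms that there is exactly one pair $(i_l,j_l)$ associated with each $\mathbf{p}_l\in \mathcal{P}(\mathbf{m}) = \{\mathbf{p}_1,\ldots,\mathbf{p}_k\}$.

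With this bijection in hand, I would rewrite the inner double sum in \eqref{eq:a.2} as a single sum indexed by $l=1,\ldots,\#(\mathbf{m})$, replacing $(i,j)$ by $(i_l,j_l)$ and $a_{\Delta_{i,j}^{-1}(\mathbf{m})}$ by $a_{\mathbf{p}_l}$, while noting that $m_{j_l}$ (the relevant component of $\mathbf{m}$) matches the coefficient $(m_j + 1)$ in \eqref{eq:a.2} only up to the shift induced by $\Delta_{i,j}^{-1}$: since $\Delta_{i,j}^{-1}(\mathbf{m}) = \mathbf{m} - \mathbf{e}_{j-i} + \mathbf{e}_j$ has $j$-th component equal to $m_j + 1$, and this is exactly the $j_l$-th component of $\mathbf{p}_l$ which equals $m_{j_l}+1$, the coefficient $(m_{j_l}+1)c_{i_l,j_l}b_{i_l}$ in \eqref{eq:a.3} agrees with the corresponding coefficient in \eqref{eq:a.2}. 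Substitution yields \eqref{eq:a.3}.

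The main (minor) obstacle is simply bookkeeping: making sure that the index $j_l$ in the factor $(m_{j_l}+1)$ refers to the correct coordinate after the re-indexing, and that the bijection between $\{(i,j) : \Delta_{i,j}^{-1}(\mathbf{m})\in\mathcal{I}_A\}$ and $\mathcal{P}(\mathbf{m})$ is stated unambiguously. No new analytic or algebraic input beyond Lemma \ref{lem:a.9} is required.
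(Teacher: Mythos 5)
Your proposal is correct and matches the paper's intent exactly: the paper offers no separate proof of Lemma \ref{lem:a.8}, presenting it only as a rewriting of \eqref{eq:a.2}, and your re-indexing argument (the bijection between pairs $(i,j)$ with $\Delta_{i,j}^{-1}(\mathbf{m})\in\mathcal{I}_A$ and the elements of $\mathcal{P}(\mathbf{m})$, with the coefficient $(m_j+1)c_{i,j}b_i$ carried along unchanged) is precisely the omitted justification. No gap.
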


\begin{lem}
\label{lem:a.10}
Let $\Lambda$ be a nontrivial expansion of $\mathscr{X}$, $A\in \Lambda$ with the lowest rank and $r =
\mathrm{ord}(\Lambda) > 1$. Let $\mathbf{m}^*\in \mathcal{I}_A$ with the highest
degree. Then for any $\mathbf{m}\in \mathcal{I}_A$, $\#(\mathbf{m})=0$ if
and only if $C(\mathbf{m}) = C(\mathbf{m}^*)$. Furthermore, if
$\#(\mathbf{m})=0$, then $a_{\mathbf{m}}$ is a constant.
\end{lem}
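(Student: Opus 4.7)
The plan is to exploit the formula from Lemma \ref{lem:a.8}, which, when $\#(\mathbf{m})=0$, collapses to the homogeneous relation
\[
\mathscr{X}a_{\mathbf{m}} = (C(\mathbf{m}^*)-C(\mathbf{m}))\,b_0\,a_{\mathbf{m}},
\]
and then to use the minimality of $r = \mathrm{ord}(\Lambda)$ (i.e.\ the essentiality of $\Lambda$) together with Lemmas \ref{lem:a.0} and \ref{lem:a.3} to rule out every case except $C(\mathbf{m})=C(\mathbf{m}^*)$. The constancy of $a_{\mathbf{m}}$ will drop out of the same relation once $C(\mathbf{m})=C(\mathbf{m}^*)$ has been established.

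For the forward direction, suppose $\mathbf{m}\in\mathcal{I}_A$ satisfies $\#(\mathbf{m})=0$, and set $n=C(\mathbf{m}^*)-C(\mathbf{m})$. If $n\neq 0$, then $a_{\mathbf{m}}\in K\setminus\{0\}$ satisfies $\mathscr{X}a_{\mathbf{m}}=nb_0a_{\mathbf{m}}$, so Lemma \ref{lem:a.3} produces a nontrivial expansion of $\mathscr{X}$ of order $1$, contradicting that $r>1$ is the minimum possible order. Hence $n=0$ and $\mathscr{X}a_{\mathbf{m}}=0$. If $a_{\mathbf{m}}$ were non-constant, Lemma \ref{lem:a.0} would give a nontrivial expansion of order $0$, again contradicting the minimality of $r$. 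Thus $a_{\mathbf{m}}$ is constant.

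For the backward direction, suppose $\mathbf{m}\in\mathcal{I}_A$ with $C(\mathbf{m})=C(\mathbf{m}^*)$ but $\#(\mathbf{m})>0$. I would chase the $\succ$-relation upward: pick $\mathbf{m}_1\in\mathcal{I}_A$ with $\mathbf{m}_1\succ\mathbf{m}$; if $\#(\mathbf{m}_1)>0$ pick $\mathbf{m}_2\in\mathcal{I}_A$ with $\mathbf{m}_2\succ\mathbf{m}_1$, and so on. Each step strictly increases the degree by \eqref{eq:a.11} and strictly increases $C$ by \eqref{eq:a.c}, so since $\mathcal{I}_A$ is finite the chain must terminate at some $\mathbf{m}_k\in\mathcal{I}_A$ with $\#(\mathbf{m}_k)=0$ and $C(\mathbf{m}_k)>C(\mathbf{m})=C(\mathbf{m}^*)$. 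Applying the forward direction to $\mathbf{m}_k$ forces $C(\mathbf{m}_k)=C(\mathbf{m}^*)$, a contradiction.

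The main obstacle, I expect, is keeping the essentiality of $\Lambda$ clearly in view: the hypothesis $r>1$ has to be read together with the classification setup, in which $\Lambda$ is an essential expansion, so that $r$ is the minimum order among all nontrivial expansions of $\mathscr{X}$. Without this minimality, neither Lemma \ref{lem:a.0} nor Lemma \ref{lem:a.3} yields a contradiction and the argument collapses. The other subtlety is the termination of the upward $\succ$-chain in the backward direction, which rests on the finiteness of $\mathcal{I}_A$ together with the fact, recorded in \eqref{eq:a.11}, that $\succ$ strictly increases the degree ordering.
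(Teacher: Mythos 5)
Your proof is correct and follows essentially the same route as the paper's: the forward direction via Lemma \ref{lem:a.8} combined with Lemmas \ref{lem:a.3} and \ref{lem:a.0}, and the backward direction by chasing a $\succ$-chain with strictly increasing $C$ against the finiteness of $\mathcal{I}_A$. Your observation that the hypothesis must be read as ``$\Lambda$ essential'' (so that $r$ is the minimal order among nontrivial expansions) is precisely the reading the paper's own proof relies on when it derives its contradiction.
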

\begin{proof} First, we will prove that if $\#(\mathbf{m}) = 0$, then $C(\mathbf{m}) = C(\mathbf{m}^*)$.

If $\#(\mathbf{m}) = 0$,  then Lemma \ref{lem:a.8} yields
$$\mathscr{X}a_{\mathbf{m}} = (C({\mathbf{m}^*}) -
C({\mathbf{m}}))b_0a_{\mathbf{m}}.$$ 
If otherwise $C({\mathbf{m}})\not=C({\mathbf{m}^*})$, then $n = C(\mathbf{m}^*) -
C(\mathbf{m})$ is a non-zero integer, and $a_{\mathbf{m}^*}\not=0$ such that
$$\mathscr{X}a_{\mathbf{m}}
 = n b_0 a_{\mathbf{m}}.$$
 From Lemma \ref{lem:a.3}, let
 $$A' = (\delta_2 y)^{|n|} - {a_{\mathbf{m}}}^{|n|/n},$$
 then the differential ideal $\Lambda' = \{X,A'\}$ is a nontrivial expansion of $\mathscr{X}$, and with order $\leq 1$.  This contradicts with the assumption that $\Lambda$ is an essential expansion with order $> 1$.  Thus, we have concluded that $C(\mathbf{m}) =
 C(\mathbf{m}^*)$.

Next, we will prove that if $C(\mathbf{m}) = C(\mathbf{m}^*)$, then $\#(\mathbf{m}) = 0$. 

If on the contrary, $C(\mathbf{m}) = C(\mathbf{m}^*)$ but $\#(\mathbf{m})>0$,  there exists $\mathbf{m}_1\in
 \mathcal{P}(\mathbf{m})$. From \eqref{eq:a.c}, we have $C(\mathbf{m}_1) > C(\mathbf{m}) =
 C(\mathbf{m}^*)$.  Apply the previous part of the proof to $\mathbf{m}_1$, we have $\#(\mathbf{m}_1) > 0$. Thus, we can repeat the above process, and obtain $\mathbf{m}_2\in \mathcal{P}(\mathbf{m}_1)$ such that $C(\mathbf{m}_2) > C(\mathbf{m}_1)>C(\mathbf{m}^*)$ and $\#(\mathbf{m}_2) > 0$.  This procedure can continue to obtain an infinite sequence $\{\mathbf{m}_k\}_{k=1}^\infty \subseteq \mathcal{I}_A$ such that $\#(\mathbf{m}_k) > 0$ and $C(\mathbf{m}_{k+1}) > C(\mathbf{m}_k)  > C(\mathbf{m}^*)$. But $\mathcal{I}_A$ is a finite set. Thus, we come to a contradiction, and therefore $\#(\mathbf{m}) = 0$.

Now, we have  proved that $\#(\mathbf{m})$ if and only if $C(\mathbf{m}) = C(\mathbf{m}^*)$.

If $\#(\mathbf{m}) = 0$, then $C(\mathbf{m}^*) = C(\mathbf{m})$, and therefore
\eqref{eq:a.3} yields $\mathscr{X}a_{\mathbf{m}} = 0$. But
$\mathrm{ord}(\Lambda)
> 1$, thus $a_{\mathbf{m}}$ is a constant according to Lemma \ref{lem:a.0}.
\end{proof}

\begin{lem}
\label{lem:c}
Let $\Lambda$ be a nontrivial expansion of $\mathscr{X}$, $A\in \Lambda$ with the lowest rank and $r =
\mathrm{ord}(\Lambda) > 1$. Let $\mathbf{m}^*\in \mathcal{I}_A$ with the highest
degree. Then for any $\mathbf{m}\in \mathcal{I}_A$, $C(\mathbf{m})\leq C(\mathbf{m}^*)$.
\end{lem}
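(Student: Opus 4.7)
The plan is to argue by contradiction, invoking Lemma \ref{lem:a.10} as the key tool. Suppose there exists $\mathbf{m}_0 \in \mathcal{I}_A$ with $C(\mathbf{m}_0) > C(\mathbf{m}^*)$. Because $C(\mathbf{m}_0) \neq C(\mathbf{m}^*)$, Lemma \ref{lem:a.10} (contrapositive of the ``only if'' direction) forces $\#(\mathbf{m}_0) > 0$, so there is some $\mathbf{m}_1 \in \mathcal{P}(\mathbf{m}_0) \subseteq \mathcal{I}_A$ with $\mathbf{m}_1 \succ \mathbf{m}_0$.

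By \eqref{eq:a.c}, the relation $\mathbf{m}_1 \succ \mathbf{m}_0$ strictly increases $C$, so $C(\mathbf{m}_1) > C(\mathbf{m}_0) > C(\mathbf{m}^*)$. Applying Lemma \ref{lem:a.10} again to $\mathbf{m}_1$ yields $\#(\mathbf{m}_1) > 0$, and hence an $\mathbf{m}_2 \in \mathcal{I}_A$ with $C(\mathbf{m}_2) > C(\mathbf{m}_1) > C(\mathbf{m}^*)$. Iterating produces an infinite sequence $\{\mathbf{m}_k\}_{k\geq 0} \subseteq \mathcal{I}_A$ with strictly increasing $C$-values, contradicting the finiteness of $\mathcal{I}_A$ (which follows from $A$ being a polynomial with finitely many nonzero coefficients). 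Therefore no such $\mathbf{m}_0$ exists, and $C(\mathbf{m}) \leq C(\mathbf{m}^*)$ for every $\mathbf{m} \in \mathcal{I}_A$.

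The argument is essentially the same iterative ``climbing'' device already used in the second half of the proof of Lemma \ref{lem:a.10}, simply carried out above $\mathbf{m}^*$ rather than between $\mathbf{m}$ and $\mathbf{m}^*$. The only potential subtlety is to confirm that the hypothesis $r = \mathrm{ord}(\Lambda) > 1$ of Lemma \ref{lem:a.10} is inherited here (it is, since it is an assumption of Lemma \ref{lem:c}), and that Lemma \ref{lem:a.10} applies regardless of the sign of $C(\mathbf{m}) - C(\mathbf{m}^*)$ — which holds because its proof reduces to the case of non-zero $n = C(\mathbf{m}^*) - C(\mathbf{m})$ and invokes Lemma \ref{lem:a.3} with $|n|$. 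No new computation with the formulas for $f_\mathbf{m}$ is needed; the lemma is essentially a finiteness corollary of Lemma \ref{lem:a.10}.
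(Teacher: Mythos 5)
Your proof is correct and follows essentially the same route as the paper's: assume some $\mathbf{m}$ with $C(\mathbf{m})>C(\mathbf{m}^*)$, invoke Lemma \ref{lem:a.10} to get $\#(\mathbf{m})>0$, and climb via $\mathcal{P}(\cdot)$ to produce an infinite strictly $C$-increasing sequence inside the finite set $\mathcal{I}_A$. The paper's proof is exactly this iteration, so no further comparison is needed.
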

\begin{proof}If otherwise, there is $\mathbf{m}\in\mathcal{I}_A$ such that $C(\mathbf{m}) > C(\mathbf{m}^*)$, then $\#(\mathbf{m}) \geq 1$ by Lemma \ref{lem:a.10}.  Thus, there is a $\mathbf{m}_1\in \mathcal{P}(\mathbf{m})$, and $C(\mathbf{m}_1) > C(\mathbf{m}) > C(\mathbf{m}^*)$. Thus, we can repeat the procedure to obtain an infinite sequence $\{\mathbf{m}_k\}_{k=1}^{\infty}\subseteq \mathcal{I}_A$.  This is contradiction to the fact that $\mathcal{I}_A$ is a finite set, and the Lemma is concluded.
\end{proof}

\begin{lem}
\label{lem:a.11} Assume $r= \mathrm{ord}(\mathscr{X}) \geq 3$. Let $\Lambda$ be an essential expansion of $\mathscr{X}$, $A\in \Lambda$ with the lowest rank, $\mathbf{m}^* = (m_0^*, m_1^*, \cdots,m_r^*)\in \mathcal{I}_A$ with the highest degree and $a_{\mathbf{m}^*} = 1$, then $m_1^*>0$ and $m_2^* = 0$.
\end{lem}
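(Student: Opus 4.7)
My plan is to prove both claims by contradiction, in each case applying Lemma \ref{lem:a.8} to a carefully chosen $\mathbf{n}$ with $C(\mathbf{n}) = C(\mathbf{m}^*) - 1$ so as to extract a rational function $a' \in K$ satisfying either $\mathscr{X} a' = b_0 a' + b_1$ or $\mathscr{X} a' = b_0 a'$. The first produces an essential expansion of order $\leq 2$ via Lemma \ref{lem:a.4}, and the second produces one of order $\leq 1$ via Lemma \ref{lem:a.3} with $n = 1$; either contradicts $r = \mathrm{ord}(\mathscr{X}) \geq 3$. Two preliminaries anchor the argument: since $A$ has leader $y_r$, some $\mathbf{m}\in\mathcal{I}_A$ has $m_r \geq 1$ and the definition of $>$ forces $m_r^* \geq 1$; and if every $m_j^*$ with $j \geq 1$ vanished, Lemma \ref{lem:c} combined with the non-negativity of $C$ would collapse $A$ to a polynomial in $y_0$ alone, contradicting $r \geq 3$.

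The key structural observation I would establish first is that for any $\mathbf{n}$ with $C(\mathbf{n}) = C(\mathbf{m}^*) - 1$, every $\mathbf{p} \in \mathcal{P}(\mathbf{n})$ must satisfy $C(\mathbf{p}) = C(\mathbf{m}^*)$ and arise as $\Delta_{1, j'}^{-1}(\mathbf{n})$: indeed $C(\mathbf{p}) = C(\mathbf{n}) + i' \leq C(\mathbf{m}^*)$ by Lemma \ref{lem:c} forces $i' = 1$, and then Lemma \ref{lem:a.10} yields that $a_{\mathbf{p}} \in \mathbb{C}$ is constant. Hence Lemma \ref{lem:a.8} collapses to
\[
\mathscr{X} a_{\mathbf{n}} = b_0\, a_{\mathbf{n}} - b_1\, c, \qquad c = \sum_{\mathbf{p} \in \mathcal{P}(\mathbf{n})} (n_{j_{\mathbf{p}}} + 1)\, c_{1, j_{\mathbf{p}}}\, a_{\mathbf{p}} \in \mathbb{C}.
\]
I would then dispatch the cases: if $\mathbf{n} \in \mathcal{I}_A$ and $c \neq 0$, then $a' = -a_{\mathbf{n}}/c$ satisfies $\mathscr{X} a' = b_0 a' + b_1$ (Lemma \ref{lem:a.4}); if $\mathbf{n} \in \mathcal{I}_A$ and $c = 0$, then $a_{\mathbf{n}}\neq 0$ satisfies $\mathscr{X} a_{\mathbf{n}} = b_0\, a_{\mathbf{n}}$ (Lemma \ref{lem:a.3} with $n=1$); and if $\mathbf{n} \notin \mathcal{I}_A$ then $b_1 c = 0$, so either $b_1 = 0$ (applying Lemma \ref{lem:a.4} with $a' = 0$), or $c = 0$ gives an algebraic identity among the constants to be exploited.

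Applying this machinery: for $m_2^* = 0$, I would assume $m_2^* \geq 1$ and take $\mathbf{n} = \Delta_{1,2}(\mathbf{m}^*)$. The contribution of $\mathbf{m}^*$ itself to $c$ is $(n_2+1)\, c_{1,2} = m_2^* > 0$; further preimages $\Delta_{1, j'}^{-1}(\mathbf{n})$ with $j' > 2$ require $n_{j'-1} \geq 1$, which constrains them by the support of $\mathbf{m}^*$. For $m_1^* > 0$, using $m_2^* = 0$ already, I would assume $m_1^* = 0$, pick $j$ minimal with $m_j^* \geq 1$ (so $j \geq 3$), and take $\mathbf{n} = \Delta_{1, j}(\mathbf{m}^*)$; the minimality of $j$ together with $m_1^* = m_2^* = 0$ restricts additional preimages $\Delta_{1, j'}^{-1}(\mathbf{n})$ to $j' \geq j + 1$, and the same three-way alternative yields the desired contradiction.

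The main obstacle I anticipate is the subcase $\mathbf{n} \notin \mathcal{I}_A$ with $b_1 \neq 0$ and $c = 0$: this is a genuine algebraic relation among the constants $a_{\mathbf{p}}$ that does not immediately produce a lower-order expansion. Resolving it will likely require either choosing a different $\mathbf{n}$ (exploiting the flexibility in $j'$) or descending along a chain $\mathbf{n} \succ \mathbf{n}' \succ \cdots$ of successively lower-$C$ elements, using the triangular recursion of Lemma \ref{lem:a.9} to propagate the constraint $c = 0$ until a usable non-vanishing equation emerges. Verifying that this descent terminates productively, rather than cascading into trivial identities, is the delicate combinatorial bookkeeping at the heart of the proof.
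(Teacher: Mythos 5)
Your overall strategy is exactly the paper's: assume the claim fails, apply Lemma \ref{lem:a.8} to $\mathbf{n}=\Delta_{1,j}(\mathbf{m}^*)$ (with $j=2$ for the second claim, $j=k$ the least index with $m_k^*>0$ for the first), obtain $\mathscr{X}a_{\mathbf{n}}=b_0a_{\mathbf{n}}-c\,b_1$ with $c$ constant, and contradict $r\geq 3$ via Lemma \ref{lem:a.4}. But the proposal is not complete: you explicitly leave open the subcase $c=0$ with $b_1\neq 0$, call it the ``main obstacle,'' and gesture at a descent argument whose termination you admit you have not verified. That subcase is precisely what a finished proof must rule out, so as written there is a genuine gap.

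The gap closes with one observation that you circle around but never make: for your choices of $\mathbf{n}$, the set $\mathcal{P}(\mathbf{n})$ is exactly the singleton $\{\mathbf{m}^*\}$, so no cancellation in $c$ is possible. Indeed, you correctly reduce to preimages of the form $\mathbf{p}=\Delta_{1,j'}^{-1}(\mathbf{n})=\mathbf{n}-\mathbf{e}_{j'-1}+\mathbf{e}_{j'}$. For $j'=j$ this is $\mathbf{m}^*$. For any other admissible $j'$, the $j'$-th component of $\mathbf{p}$ is $m_{j'}^*+1$ while all components in positions above $j'$ coincide with those of $\mathbf{m}^*$; hence $\mathbf{p}>\mathbf{m}^*$ in the degree ordering, so $\mathbf{p}\notin\mathcal{I}_A$ (the remaining candidates are excluded because they would have a negative entry, e.g.\ $n_{j'-1}=0$ for $2\leq j'\leq j-1$ in your minimal-$j$ setup). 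Consequently $c=(n_j+1)c_{1,j}a_{\mathbf{m}^*}=m_j^*c_{1,j}>0$, since the $c_{i,j}$ are positive integers by Lemma \ref{lem:a.2}. With $c\neq 0$ your three-way case split collapses to a single case: whether or not $\mathbf{n}\in\mathcal{I}_A$, the function $a=-a_{\mathbf{n}}/c$ (possibly zero) satisfies $\mathscr{X}a=b_0a+b_1$, and Lemma \ref{lem:a.4} gives $\mathrm{ord}(\mathscr{X})\leq 2$. No chain descent, and no appeal to Lemma \ref{lem:a.3}, is needed. A minor structural difference: the paper proves $m_1^*>0$ first, without assuming $m_2^*=0$, by taking $k$ minimal with $m_k^*>0$; your reversed order also works, but only because the same singleton argument goes through in both orders.
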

\begin{proof}
(1). If $m_1^* = 0$, we can write $\mathbf{m}^*$ as 
$$\mathbf{m}^* = (m_0^*, 0,\cdots, 0, m_k^*, \cdots, m_r^*),$$
where $1 < k \leq r$ and $m_k^* >0 $.
Let 
$$\mathbf{m} = \Delta_{1,k}(\mathbf{m}^*) = (m_0^*,0,\cdots,0,1,m_k^*-1,m_{k+1}^*,\cdots,m_r^*),$$
it is easy to have $\mathcal{P}(\mathbf{m}) = \{\mathbf{m}^*\}$. Hence,
$$\mathscr{X}a_{\mathbf{m}} = b_0 a_\mathbf{m} - m_k^* c_{1,k} b_1$$
from Lemma \ref{lem:a.8}. Here we have applied $C(\mathbf{m}^*) - C(\mathbf{m}) = 1$ and $a_{\mathbf{m}^*} = 1$. Let
$$a = -\dfrac{a_{\mathbf{m}}}{m_k^* c_{1,k}},$$
then 
$$\mathscr{X} a = b_0 a + b_1.$$
Thus, we have $\mathrm{ord}(\mathscr{X}) \leq 2$ from Lemma \ref{lem:a.4}, which contradicts with $r \geq 3$.

(2). If $m_2^*>0$, let 
$$\mathbf{p} = \Delta_{1,2}(\mathbf{m}^*) =
(m_0^*, m_1^*+1,m_2^*-1,m_3^*,\cdots,m_r^*).$$ 
It is easy to verify 
$\mathcal{P}(\mathbf{p}) = \{\mathbf{m^*}\}$ as follows. (1) Since $\Delta_{1,2}(\mathbf{m^*}) = \mathbf{p}$, we have $\mathbf{m^*}\in\mathcal{P}(\mathbf{p})$. (2) If there is any other $\mathbf{m'}\in \mathcal{P}(\mathbf{p})$, then $\Delta_{i,j}(\mathbf{m'})
=\mathbf{p}$ for some $(i,j)\not= (1,2)$. Thus, we always have $j>2$, which yields $\mathbf{m}' >\mathbf{m^*}$, and hence contradicts with the assumption that $\mathbf{m}^*$ is the highest.

Hence, we have
$$\mathscr{X}a_{\mathbf{p}} =  (C(\mathbf{m}^*) - C(\mathbf{p})) b_0 a_{\mathbf{p}} - c_{1,2} m_2^* b_1a_{\mathbf{m}^*}
$$
from Lemma \ref{lem:a.8}. Similar to the above argument as in (1), we have $\mathrm{ord}(\mathscr{X})\leq 2$, which is contradiction to the assumption. Thus, we must have $m_2^* = 0$.
\end{proof}

\subsection{Proof of Theorem \ref{th:1}}

Now, we are ready to prove our main Theorem.

\begin{proof}[Proof of Theorem \ref{th:1}]
Let $\Lambda$ be a nontrivial expansion of $\mathscr{X}$, and $A\in \Lambda$ with  the lowest rank, $\mathbf{m}^*\in \mathcal{I}_A$ with the highest degree, and $a_{\mathbf{m}^*} = 1$. 

(1).  If $r = 0$,  let $n = \mathbf{m}^*$,  we can write $A$ as
$$A = y^n + a_1 y^{n-1} + \cdots + a_n,\quad (a_i \in K,  i  = 1,\cdots, n),$$
with at least one $a_i\in K\backslash\mathbb{C}$. Thus, the equation \eqref{eq:a.3} implies
$$\mathscr{X}a_i = 0.$$
Let
$$B = y - a_i,$$
then $a_i$ satisfies equations
\begin{equation}
\mathscr{X}y = 0,\quad \mathscr{D}_{B}y = 0.
\end{equation}
Hence, $ \{X, B\}$ is a nontrivial expansion of $\mathscr{X}$ with  order $0$, and (1) is concluded. 

(2). If $r = 1$, we argue that there exists $\mathbf{m}\in \mathcal{I}_A$, with $\mathbf{m} < \mathbf{m}^*$, such that $C(\mathbf{m}) \not= C(\mathbf{m}^*)$. 
If otherwise, for any $\mathbf{m}\in \mathcal{I}_A$,  $C(\mathbf{m}) = C(\mathbf{m}^*)$, then $A$ must has form
$A = (\delta_2y)^{n}p(y)$, where $n = C(\mathbf{m}^*)$ and $p(y)$ is a polynomial of $y$, with coefficients in $K$.  Thus, let $\omega$ to be a non constant solution of 
$$\mathscr{X} y = 0,\quad \mathscr{D}_A y = 0,$$
then either
$$
\mathscr{X} \omega = 0,\quad \delta_2 \omega = 0,
\quad \mathrm{or}\quad \mathscr{X} \omega = 0,\quad p(\omega) = 0.
$$
But these are not possible because the former case implies $X_1\equiv 0$, and latter case implies  $\mathrm{ord}(\Lambda) = 0$, both are in contradiction to our assumptions. 

Now, let $\mathbf{m}$ such that $C(\mathbf{m}) \not = C(\mathbf{m}^*)$. We note that $\#(\mathbf{m}) = 0$, thus, the equation \eqref{eq:a.3} yields
$$\mathscr{X}a_{\mathbf{m}} = (C(\mathbf{m}^*) - C(\mathbf{m})) b_0 a_\mathbf{m}.$$
From Lemma \ref{lem:a.3},  let $n = C(\mathbf{m}^*) - C(\mathbf{m})$, $a = {a_{\mathbf{m}}}^{|n|/n}$, and
$$B = (\delta_2 y)^{|n|} - a,$$
then $\{X, B\}$ is a nontrivial expansion of $\mathscr{X}$, and hence (2) is proved.

(3). If $r = 2$, let $\mathbf{m}^* = (m_0^*, m_1^*,m_2^*)$  and $\mathbf{m} =
\Delta_{1,2}(\mathbf{m}) = (m_0^*, m_1^*+1,m_2^* - 1)$. It is easy to verify $\mathcal{P}(\mathbf{m}) =
\{\mathbf{m}^*\}$. Thus, from Lemma \ref{lem:a.8}, we have
$$\mathscr{X}a_{\mathbf{m}} = b_0 a_{\mathbf{m}} - m_2^* c_{1,2}  b_1.$$
Here, we note $C(\mathbf{m}^*) - C(\mathbf{m}) = 1$ and $a_{\mathbf{m}^*} = 1$. Let
$$a = -\dfrac{a_{\mathbf{m}}}{m_2^*  c_{1,2}},$$
then $a$ satisfies
$$\mathscr{X} a = b_0 a + b_1.$$
From Lemma \ref{lem:a.4}, let
$$B = \delta_2^2 y - a \delta_2 y,$$
then the differential ideal $\{X, B\}$ is a nontrivial expansion of $\mathscr{X}$.

(4). If $r = 3$, we write $\mathbf{m} = (m_0^*, m_1^*, m_2^*, m_3^*)$. From Lemma
\ref{lem:a.11}, we have $m_1^*>0$ and $m_2^* = 0$, i.e., $\mathbf{m}^* = (m_0^*, m_1^*, 0, m_3^*)$. Let
$$\mathbf{p} = \Delta_{1,3}(\mathbf{m}^*) = (m_0^*, m_1^*, 1, m_3^*-1),$$
$$ \mathbf{m} = {\Delta_{1,2}^{-1}}(\mathbf{p}) = (m_0^*, m_1^*-1,2, m_3^*-1).$$ 
$$ \mathbf{q} = \Delta_{1,2}(\mathbf{p}) = (m_0^*, m_1^*+1,0,m_3^*-1),$$
It is easy to have
$C(\mathbf{m}) = C(\mathbf{m}^*)$. Therefore, from Lemma
\ref{lem:a.10}, $a_{\mathbf{m}}$ is a constant. Furthermore, we have 
$\mathcal{P}(\mathbf{p}) \subseteq \{\mathbf{m}^*,\mathbf{m}\}$ and
$\mathcal{P}(\mathbf{q}) \subseteq \{\mathbf{m}^*,\mathbf{p}\}$.  

Applying Lemma \ref{lem:a.8} to $a_{\mathbf{p}}$ and $a_{\mathbf{q}}$, respectively, and notice that $C(\mathbf{m}^*) - C(\mathbf{p}) =1 $ and $C(\mathbf{m}^*) - C(\mathbf{q}) = 2$, we have
\begin{equation}
\label{eq:a.6} 
\mathscr{X}a_{\mathbf{p}} = b_0 a_{\mathbf{p}} -
(m_3^* c_{1,3} a_{\mathbf{m}^*} + 2 c_{1, 2} a_{\mathbf{m}}) b_1,
\end{equation}
and
\begin{equation}
\label{eq:a.7} 
\mathscr{X}a_{\mathbf{q}} = 2 b_0 a_{\mathbf{q}} -
(m_3^*  c_{2,3}  b_2 a_{\mathbf{m}^*} + c_{1,2} b_1 a_{\mathbf{p}}).
\end{equation}
Since $a_{\mathbf{m}^*}$ and $a_\mathbf{m}$ are constants, we must have   
$m_3^* c_{1,3} a_{\mathbf{m}^*} + 2 c_{1,2}  a_{\mathbf{m}} = 0$
and $a_{\mathbf{p}} = 0$. If otherwise, we should have $r \leq 2$ from Lemma \ref{lem:a.3} or Lemma \ref{lem:a.4}.  

In \eqref{eq:a.7}, let $a_{\mathbf{p}} = 0$, $a_{\mathbf{m}^*}= 1$, and let
$$a = -\dfrac{a_{\mathbf{q}}}{m_3^* c_{2,3}},$$
then $a$ satisfies
$$\mathscr{X} a = 2 b_0 a + b_2.$$
From Lemma \ref{lem:a.15} and let
$$B = 2 (\delta_2y)(\delta_2^3 y) - 3 (\delta_2^2 y)^2 - a (\delta_2 y)^2,$$
the differential ideal $\{X, B\}$ is a nontrivial expansion of $\mathscr{X}$, and (4) is proved.

(5). If $r > 3$, we will show that $r = \infty$. If otherwise, $r$ is finite, then  Lemma \ref{lem:a.11} yields $m_1^*>0$ and $m_2^* = 0$, and therefore $\mathbf{m}^*$ can be written as 
$$\mathbf{m}^* = (m_0^*, m_1^*, 0,\cdots, 0, m_k^*, \cdots, m_r^*),$$
where $2 < k \leq r$ and $m_1^*, m_k^* >0 $.  We have the following.
\begin{enumerate}
\item[(a)] If $k = 3$, then 
$$\mathbf{m}^* = (m_0^*, m_1^*,0,m_3^*,m_4^*\cdots, m_r^*).$$
Let
\begin{eqnarray*}
\mathbf{p}&=& \Delta_{1,3}(\mathbf{m}^*) = (m_0^*,m_1^*,1,m_3^*-1,m_4^*,\cdots,m_r^*)\\
\mathbf{m}&=&\Delta_{1,2}^{-1}(\mathbf{p}) =  (m_0^*,m_1^*-1,2,m_3^*-1,m_4^*\cdots,m_r^*)\\
\mathbf{q} &=& \Delta_{1,2}(\mathbf{p}) = (m_0^*,m_1^*+1,0,m_3^*-1,m_4^*\cdots,m_r^*).
\end{eqnarray*}
Then $\#(\mathbf{m}) = 0$, $\mathcal{P}(\mathbf{p}) \subseteq \{\mathbf{m}^*, \mathbf{m}\}$ and $\mathcal{P}(\mathbf{q}) \subseteq \{\mathbf{m}^*, \mathbf{p}\}$.  Following the discussions as in (4), we have $\mathrm{ord}(\mathscr{X})\leq 3$, which contradicts with $r>3$.

\item[(b)] If $k>3$, let
\begin{eqnarray*}
\mathbf{p}&=&\Delta_{1,k}(\mathbf{m}^*) =(m_0^*,m_1^*,0,\cdots,1,m_k^*-1,\cdots,m_r^*)\\
\mathbf{m}&=&\Delta_{k-2,k-1}^{-1}(\mathbf{p}) = (m_0^*,m_1^*-1,0,\cdots,2,m_k^*-1,m_r^*).
\end{eqnarray*}
Then $\mathcal{P}(\mathbf{p})\subseteq \{\mathbf{m}^*,\mathbf{m}\}$. Therefore,
$$\mathscr{X}a_{\mathbf{p}} = b_0 a_{\mathbf{p}} - (m_k^* c_{1,k} b_1+ 2 c_{k-2,k-1} b_{k-2} a_{\mathbf{m}}).$$
Thus, we have $a_{\mathbf{m}}\not=0$, i.e., $\mathbf{m}\in \mathcal{I}_A$, otherwise we should have $\mathrm{ord}(\mathscr{X}) \leq 2$ as previous.  Furthermore, we have
$$C(\mathbf{m}) = C(\mathbf{m}^*) + k-3 > C(\mathbf{m}^*),$$
which is in contradiction to Lemma \ref{lem:c}.
\end{enumerate}
Thus, the above arguments conclude that $r$ must be $\infty$, and the Theorem has been proved.
\end{proof}

\section{Applications}
\label{sec:appl}

In this section,  we will apply the previous results to study the classification of polynomial differential equations \eqref{eq:3} and give some examples.   

First, from the proof of Lemmas \ref{lem:a.3} - \ref{lem:a.15}, the explicit
method to determine the class of a polynomial differential equation  \eqref{eq:3}  can be given as follows.
\begin{thm}
\label{th:2}Consider the polynomial differential equation \eqref{eq:3}, let
\begin{equation}
\label{eq:Bi}
b_i = -X_1\delta_2^{i+1}(\frac{X_2}{X_1}),\quad  (i = 0,1,2)
\end{equation}
and $r$ to be the order of the corresponding differential operator \eqref{eq:1}, then
\begin{enumerate}
\item[(1)] $r=0$ if, and only if, $K$ contains a first integral of \eqref{eq:3}; 
\item[(2)] $r=1$ if, and only if, $K$ contains no first integral of \eqref{eq:3}, and there exists $a\in K\backslash\{0\}$, and $n\in
\mathbb{Z}\backslash\{0\}$, such that
\begin{equation}
\label{eq:31} \mathscr{X} a = n b_0 a.
\end{equation}
In this case, \eqref{eq:3} has an integrating factor 
\begin{equation}
\eta = \dfrac{a^{1/n}}{X_1}.
\end{equation}
 \item[(3)] $r=2$ if, and only if, \eqref{eq:31} is not satisfied by any $a\in K\backslash \{0\}$ and $n\in \mathbb{N}$, and there exists
$a\in K$, such that 
\begin{equation} 
\label{eq:32} 
\mathscr{X} a = b_0 a + b_1.
\end{equation}
In this case, \eqref{eq:3} has an integrating factor of the form
\begin{equation}
\label{eq:3.5}
\eta = \dfrac{1}{X_1}\exp\left[\int_{(x_1^0,x_2^0)}^{(x_1,x_2)} \dfrac{a}{X_1}\left( X_1 d x_2 -(X_2 a + b_0) d x_1 \right)\right].
\end{equation}
 \item[(4)] $r=3$ if, and only if, \eqref{eq:32} is not satisfied by any $a\in K$, and there exists $a\in K$, such that
\begin{equation}
\label{eq:33} \mathscr{X}a = 2 b_0 a +  b_2. 
\end{equation}
In this case, \eqref{eq:3} has an integrating factor of the form
\begin{equation}
\eta = \dfrac{1}{X_1} \exp\left[\int_{(x_1^0,x_2^0)}^{(x_1,x_2)}(-\delta_2 \dfrac{X_2}{X_1} - \dfrac{X_2}{X_1} u) d x_1 + u d x_2\right],
\end{equation}
where $u$ is a solution of following partial differential equations
\begin{equation}
\left\{
\begin{array}{rcl}
\delta_1 u &=& - \delta_2^2 \dfrac{X_2}{X_1} - \dfrac{X_2}{X_1}a - (\delta_2 \dfrac{X_2}{X_1}) u - \dfrac{1}{2}(\dfrac{X_2}{X_1})u^2\\
\delta_2 u &=& a + \dfrac{1}{2} u^2.
\end{array}
\right.
\end{equation}
 \item[(5)] $r=\infty$ if, and only if, \eqref{eq:33} is not satisfied by any $a\in K$.
\end{enumerate}
\end{thm}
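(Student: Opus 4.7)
The plan is to derive Theorem \ref{th:2} as a direct consequence of Theorem \ref{th:1} combined with the four constructive Lemmas \ref{lem:a.0}, \ref{lem:a.3}, \ref{lem:a.4}, \ref{lem:a.15}, plus a reading of the explicit 1-forms produced inside those proofs to identify the integrating factors. The structure is five biconditionals of the form ``$\mathrm{ord}(\mathscr{X}) = r$ iff equation [$r$] is solvable in $K$ but no lower-order equation is,'' handled one by one.

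For the \emph{sufficiency} direction in each case, I would invoke the relevant construction lemma: if \eqref{eq:31} has a solution $a \in K\setminus\{0\}$, then Lemma \ref{lem:a.3} exhibits a nontrivial expansion whose generator $A = (\delta_2 y)^{|n|} - a^{|n|/n}$ has leader $\delta_2 y$, so $\mathrm{ord}(\mathscr{X}) \leq 1$; together with the assumption that no $a \in K$ is a first integral (ruling out $r=0$ via Lemma \ref{lem:a.0}) this gives $r = 1$. The cases $r = 2$ and $r = 3$ proceed identically via Lemmas \ref{lem:a.4} and \ref{lem:a.15}.

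For \emph{necessity}, I would use Theorem \ref{th:1} to fix the form of $A$ and then read off the PDE for its leading non-trivial coefficient using Lemma \ref{lem:a.9}. In case $r=1$, with $A = (\delta_2 y)^n - a$ and $\mathbf{m}^* = (0,n)$, applying \eqref{eq:a.2} at $\mathbf{m} = (0,0)$ gives $\mathscr{X}a = n b_0 a$, which is exactly \eqref{eq:31}. For $r=2$, with $A = \delta_2^2 y - a \delta_2 y$ and $\mathbf{m}^* = (0,0,1)$, applying \eqref{eq:a.2} at $\mathbf{m} = \Delta_{1,2}(\mathbf{m}^*) = (0,1,0)$ and noting $c_{1,2}=1$ and $C(\mathbf{m}^*)-C(\mathbf{m})=1$ yields $\mathscr{X}a = b_0 a + b_1$, i.e.\ \eqref{eq:32}. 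For $r=3$, I would retrace the argument from the last part of the proof of Theorem \ref{th:1}: after setting $a_{\mathbf{p}}=0$ and normalizing by $-1/(m_3^* c_{2,3})$, equation \eqref{eq:a.7} collapses to $\mathscr{X}a = 2 b_0 a + b_2$. Finally, $r = \infty$ is handled by contrapositive: if any of \eqref{eq:31}--\eqref{eq:33} were solvable the previous cases would force $r \leq 3$.

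The integrating factor formulas follow by reading off, from each construction proof, the function $u$ that satisfies $\delta_2 \omega = u$. Since $\omega$ is a first integral of \eqref{eq:3}, one has $d\omega = -\eta(X_2 dx_1 - X_1 dx_2)$ for an integrating factor $\eta$, so $\eta = u/X_1$. In case $r=1$ this immediately gives $\eta = a^{1/n}/X_1$; in case $r=2$, reading $u = \exp(\eta)$ from the proof of Lemma \ref{lem:a.4} with $d\eta = b \, dx_1 + a \, dx_2$ produces \eqref{eq:3.5}; in case $r=3$, $u = \exp(\int v\,dx_1 + u\,dx_2)$ from \eqref{eq:eta} of Lemma \ref{lem:a.15} produces the stated formula, with the auxiliary $u$ solving the first-order system \eqref{eq:a.4}. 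The main obstacle is the bookkeeping for $r=3$: verifying that \eqref{eq:a.6}--\eqref{eq:a.7} really do force $a_{\mathbf{p}} = 0$ and $m_3^* c_{1,3} + 2 c_{1,2} a_{\mathbf{m}} = 0$ without creating an expansion of order $\leq 2$ (which would contradict $r = 3$) requires a careful reuse of Lemmas \ref{lem:a.3} and \ref{lem:a.4} on the auxiliary constants $a_{\mathbf{p}}, a_{\mathbf{m}}$, exactly as in part (4) of the Theorem \ref{th:1} proof. All other bookkeeping is routine.
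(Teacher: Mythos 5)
The paper itself omits the proof of Theorem \ref{th:2} (``straightforward from previous section''), and your reconstruction---sufficiency from the construction Lemmas \ref{lem:a.0}, \ref{lem:a.3}, \ref{lem:a.4}, \ref{lem:a.15}, necessity from Theorem \ref{th:1} plus Lemma \ref{lem:a.9}, integrating factors read off via $\eta=\delta_2\omega/X_1$---is exactly the intended route. However, there is one step you cannot wave away as routine. The theorem's conditions are \emph{not} of the form ``equation $[r]$ is solvable and no lower-order equation is'': case (3) excludes only \eqref{eq:31}, case (4) excludes only \eqref{eq:32}, and case (5) excludes only \eqref{eq:33}. To rule out, say, $r\in\{0,1\}$ in case (4), or $r\in\{0,1,2\}$ in the $\Leftarrow$ direction of case (5), you need the downward solvability chain: a first integral $\omega\in K$ gives $a=\delta_2\omega\neq 0$ with $\mathscr{X}a=b_0a$, i.e.\ \eqref{eq:31} with $n=1$; a solution $a$ of \eqref{eq:31} gives $c=\delta_2a/(na)$ solving \eqref{eq:32}; and a solution $a$ of \eqref{eq:32} gives $d=\delta_2a-\tfrac12 a^2$ solving \eqref{eq:33}. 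Each is a short computation with the commutator identity of Lemma \ref{lem:a.1} and the relation $X_1\delta_2(b_i/X_1)=b_{i+1}$, but none of it appears in your outline, and without it you only prove weaker biconditionals than the ones stated (your own summary sentence ``solvable in $K$ but no lower-order equation is'' silently assumes this chain).

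A second, smaller point: reading off the construction in Lemma \ref{lem:a.4} as you propose yields
\begin{equation*}
\eta=\frac{1}{X_1}\exp\left[\int_{(x_1^0,x_2^0)}^{(x_1,x_2)}\frac{b_0-X_2a}{X_1}\,dx_1+a\,dx_2\right],
\end{equation*}
whose $dx_1$-coefficient is $(b_0-X_2a)/X_1$, not the $-a(X_2a+b_0)/X_1$ appearing in the printed formula \eqref{eq:3.5}; the two do not agree in general. So your claim that the Lemma \ref{lem:a.4} construction ``produces \eqref{eq:3.5}'' is not literally correct---\eqref{eq:3.5} appears to be a misprint, and your writeup should either derive the formula above or justify the printed one separately. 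The identifications in cases (2) and (4), and the remainder of your bookkeeping for $r=3$, are fine.
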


The proof is straightforward from pervious section, and is omitted here. 

We will give examples for each of the classes in Theorem \ref{th:2}.

It is easy to see that all equations
\begin{equation}
\frac{d x_1}{d t} = 1,\quad \dfrac{d x_2}{d t} = p(x_1),
\end{equation} 
with $p(x_1)$ a polynomial, have order $r=0$. The general homogenous linear equations\footnote{Here by general we mean most equations of this form.}
\begin{equation}
\frac{d x_1}{d t} = 1,\quad \dfrac{d x_2}{d t} = p(x_1) x_2,
\end{equation}
with $p(x_1)$ a rational function, have order $r = 1$, and the general non homogenous linear equations
\begin{equation}
\frac{d x_1}{d t} = 1,\quad \dfrac{d x_2}{d t} = p(x_1) x_2 + q(x_1),
\end{equation}
where $p(x_1)$ and $q(x_1)$ are rational functions, have order $r = 2$. 

In following, we will show that the general Riccati equation is an example of order $r = 3$.
\begin{prop}
\label{cor:3} The general Riccati
equations
\begin{equation}
\label{eq:riccati}
\dfrac{d x_1}{d t} = 1,\quad \frac{d x_2}{d t} = p_2(x_1) x_2^2 + p_1(x_1) x_2 + p_0(x_1),
\end{equation}
where $p_i(x), (i=0,1,2)$ are rational functions, have order $r = 3$.
\end{prop}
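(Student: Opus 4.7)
The plan is to bound $r$ from above by producing an explicit essential expansion via Lemma~\ref{lem:a.15}, and then to bound $r$ from below by combining the classical linearization of the Riccati equation with Singer's theorem, as noted after Theorem~\ref{th:3}.

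For the upper bound I would compute the invariants $b_0, b_1, b_2$ of the Riccati operator. Since $X_1 = 1$, direct differentiation of $X_2/X_1 = p_2 x_2^2 + p_1 x_2 + p_0$ gives $b_0 = -(2p_2 x_2 + p_1)$, $b_1 = -2 p_2$, and, crucially, $b_2 = -\delta_2^3 X_2 = 0$ because $X_2$ is quadratic in $x_2$. Vanishing of $b_2$ means that the equation $\mathscr{X} a = 2 b_0 a + b_2$ of Theorem~\ref{th:2}(4) admits the trivial solution $a = 0 \in K$. Applying Lemma~\ref{lem:a.15} to this $a$ produces the nontrivial expansion $\{X, A\}$ with $A = 2(\delta_2 y)(\delta_2^3 y) - 3(\delta_2^2 y)^2$; its leader is $\delta_2^3 y$, which forces $r \leq 3$.

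For the lower bound $r \geq 3$ I would use the remark following Theorem~\ref{th:3}: by Singer's theorem the three cases $r = 0, 1, 2$ are precisely those for which \eqref{eq:3} admits a Liouvillian first integral. The classical substitution $x_2 = -\dot u/(p_2 u)$ converts \eqref{eq:riccati} into the second-order linear equation
\[
\ddot u - \Bigl(p_1 + \frac{\dot p_2}{p_2}\Bigr)\dot u + p_0 p_2\, u = 0,
\]
and a Liouvillian first integral of the Riccati translates into a Liouvillian solution of this linear ODE. By Kovacic's algorithm, or equivalently by a differential Galois theoretic argument, the generic second-order linear equation of this form has Galois group $\mathrm{SL}_2(\mathbb{C})$ and hence no Liouvillian solutions. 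Therefore for generic $p_0, p_1, p_2$ the Riccati admits no Liouvillian first integral, ruling out $r \leq 2$ and yielding $r = 3$.

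The main obstacle is making the word ``general'' precise: the natural reading is that $r = 3$ holds on a Zariski-dense open subset of each finite-dimensional parameter space of rational coefficients $(p_0, p_1, p_2)$, and the Kovacic-based genericity argument must be formulated accordingly. A more elementary alternative, avoiding the appeal to differential Galois theory, would be to rule out $r = 0, 1, 2$ directly by polar and degree analysis in $x_2$ of the PDEs $\mathscr{X}a = 0$, $\mathscr{X}a = n b_0 a$, and $\mathscr{X}a = b_0 a + b_1$, exploiting the fact that any algebraic pole locus $x_2 = \beta(x_1)$ of a putative rational $a$ must evolve along a solution curve of the Riccati and so be transcendental for generic coefficients; this route is more hands-on but requires substantial bookkeeping on the lower-order coefficients in each case.
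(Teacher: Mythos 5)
Your proposal is correct and follows essentially the same route as the paper: the upper bound $r\leq 3$ comes from observing that $X_2$ is quadratic in $x_2$, so $b_2=0$ and $a=0$ solves \eqref{eq:33} (via Lemma \ref{lem:a.15}), while the lower bound comes from the non-existence of a Liouvillian first integral for the general Riccati equation together with Singer's theorem. The only difference is that the paper simply cites Liouville and Singer for the latter fact, whereas you sketch a proof of it via the classical linearization and a Kovacic/differential-Galois argument; this extra detail is sound but not needed for the paper's argument.
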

\begin{proof} We have known that the general Riccati equation \eqref{eq:riccati} does not have Liouvillian first integral (refer \cite{L:41} and\cite{Singer:92}), and hence the order $r $ is either $3$ or $\infty$ according to \cite{Singer:92}. 

From the equation \eqref{eq:riccati}, we have $X_1 = 1$ and $X_2 = p_2(x_1) x_2^2 + p_1(x_1) x_2 + p_0(x_1).$
Thus, we have $b_2 = 0$ from \eqref{eq:Bi}, and the equation \eqref{eq:33} has solution $a = 0$, therefore the order is $3$.
\end{proof}

Finally, we will show an example of differential equation with order $r=\infty$.

Consider the van der Pol equation
\begin{equation}
\label{vdp} \left\{\begin{array}{rcl} \dot{x}_1 &=& x_2 - \mu
(\dfrac{x_1^3}{3} - x_1),\\
\dot{x}_2 &=& -x_1 \end{array}\right.
 \ \ \ (\mu \not = 0).
\end{equation}
The van der Pol equation is well known for its existence of a limit cycle. 
Following Lemma was proved independently by Cheng et al.\cite{Cheng:95} and Odani\cite{Oda:95}, respectively. 

\begin{lem} (\cite{Cheng:95} and \cite{Oda:95})
\label{lem:vdp} The system of the van der Pol equation \eqref{vdp} has no algebraic solution curves. In particular, the limit cycle is not algebraic.
\end{lem}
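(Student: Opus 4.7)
The plan is to argue by contradiction using Darboux theory of invariant algebraic curves. Suppose the system possesses an irreducible algebraic solution curve $\{f=0\}$ with $f\in\mathbb{C}[x_1,x_2]$. Then there exists a polynomial cofactor $K(x_1,x_2)$ of total degree at most $\deg\mathscr{X}-1=2$ such that
\begin{equation*}
\mathscr{X} f = \left(x_2-\mu\!\left(\tfrac{x_1^3}{3}-x_1\right)\right)\partial_1 f - x_1\,\partial_2 f \;=\; K(x_1,x_2)\,f.
\end{equation*}
My plan is to rule out every such pair $(f,K)$ by a descending induction on the $x_2$-degree of $f$.

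First, expand $f=\sum_{k=0}^{n}a_k(x_1)\,x_2^k$ with $a_n\not\equiv0$ and $K=\sum_{i+j\le 2}k_{ij}\,x_1^i x_2^j$. Matching the coefficients of $x_2^{n+2}$, $x_2^{n+1}$, and $x_2^{n}$ on the two sides of the invariance identity is the first key reduction: the top coefficient forces $k_{02}=0$; the next yields the ODE $a_n'(x_1)=(k_{01}+k_{11}x_1)\,a_n(x_1)$, whose only polynomial solutions require $k_{01}=k_{11}=0$ and $a_n$ to be a nonzero constant, which we normalize to $1$. Consequently $K$ depends only on $x_1$ and is a polynomial of degree at most $2$, say $K(x_1)=\alpha+\beta x_1+\gamma x_1^2$. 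This is the crucial structural simplification that makes the rest tractable.

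Second, compare the coefficients of $x_2^{n-j}$ for $j=1,2,\ldots,n+1$. Each comparison produces a first-order linear ODE for $a_{n-j}(x_1)$ whose right-hand side couples to $a_{n-j+1}$ through the linear combination $-(n-j+1)\,x_1\,a_{n-j+1}$ coming from the $-x_1\,\partial_2 f$ term, and to $a_{n-j+1}'$ through multiplication by $P(x_1)=\mu(x_1^3/3-x_1)$ coming from the $-P\,\partial_1 f$ term. Integrating this chain and imposing that each $a_{n-j}$ be a polynomial successively pins down the integration constants and the coefficients $\alpha,\beta,\gamma$ of $K$. After finitely many steps the remaining equation for $a_0$ becomes over-determined and forces $\mu=0$, contradicting $\mu\neq 0$.

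The main obstacle is the combinatorial growth in the recursion: because $P(x_1)$ has degree $3$, the degree of $a_{n-j}$ in $x_1$ grows by roughly $3j$, and tracking when polynomiality fails requires careful bookkeeping. The cleanest way to push through is to isolate the leading $x_1$-coefficient of $a_{n-j}$ and derive from it a scalar linear recurrence whose only polynomial-terminating solution is the trivial one; equivalently, one can analyze $f$ on the Poincar\'e compactification, where the cubic term $\mu x_1^3/3$ dominates the vector field at infinity and forces a specific form for the intersection divisor of $f$ with the line at infinity that is incompatible with the above recursion when $\mu\ne0$. Either route concludes that no Darboux polynomial exists, so the system has no algebraic solution curve, and in particular the limit cycle is not algebraic.
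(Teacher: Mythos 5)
The paper offers no proof of this lemma at all---it is imported verbatim from Cheng et al.\ and Odani---so there is no internal argument to compare against; your proposal has to stand on its own. Its opening reduction is sound: with $f=\sum_{k=0}^{n}a_k(x_1)x_2^k$ and a cofactor of degree at most $\deg\mathscr{X}-1=2$, matching the coefficients of $x_2^{n+2}$ and $x_2^{n+1}$ does give $k_{02}=0$, then $k_{01}=k_{11}=0$ and $a_n$ a nonzero constant, so $K=K(x_1)=\alpha+\beta x_1+\gamma x_1^2$. That much is correct and is indeed how the published proofs begin.

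The gap is everything after that. Writing $P(x_1)=\mu(x_1^3/3-x_1)$, the comparison at level $x_2^{m}$ reads $a_{m-1}'=Ka_m+Pa_m'+(m+1)x_1a_{m+1}$, and the whole content of the lemma is hidden in the terminal compatibility condition $Pa_0'+x_1a_1+Ka_0=0$. Your claim that this chain ``becomes over-determined and forces $\mu=0$'' is asserted, not demonstrated; you yourself identify it as the main obstacle and then propose two substitute strategies (a leading-coefficient recurrence, or Poincar\'e compactification) without executing either, so the argument never actually reaches a contradiction. There is also a bookkeeping slip that matters here: the $-x_1\partial_2 f$ term couples the equation determining $a_{m-1}$ to $a_{m+1}$ (two indices up), while $-P\,\partial_1 f$ couples it to $a_m'$ (one index up); since the entire difficulty is the interplay between the degree-$3$ growth injected by $P$ and the lower-order term $x_1a_{m+1}$, conflating these offsets is not harmless. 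Odani closes exactly this gap with a short degree/divisibility argument exploiting $\deg F=3>\deg g=1$ in the Li\'enard form rather than brute-force integration of the chain; some such structural input, or a genuinely completed induction on the leading $x_1$-coefficients, is required before your outline becomes a proof.
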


\begin{prop}
\label{cor:2} 
The order of the van der Pol equation \eqref{vdp} is $r=\infty$.
\end{prop}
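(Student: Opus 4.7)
The plan is to use Theorem~\ref{th:2} to reduce $r=\infty$ to the statement that none of the equations \eqref{eq:31}, \eqref{eq:32}, \eqref{eq:33} admits a rational solution for the van der Pol system, together with the non‐existence of a rational first integral. Each of these four pieces will be obtained as a consequence of Lemma~\ref{lem:vdp} via an elementary pole/divisibility analysis of a putative rational solution.

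First I would compute the coefficients $b_i$ explicitly. Since $X_1 = x_2 - \mu(x_1^3/3 - x_1)$ and $X_2 = -x_1$, one has $X_2/X_1 = -x_1/X_1$ and $\delta_2 X_1 = 1$, from which a short induction gives
\[
b_k = \frac{(-1)^{k+1}(k+1)!\,x_1}{X_1^{k+1}}, \quad k = 0,1,2;
\]
in particular $b_0 = -x_1/X_1$, $b_1 = 2x_1/X_1^2$, $b_2 = -6x_1/X_1^3$. I would also note two structural facts to be used throughout: (i) $X_1$ is irreducible in $\mathbb{C}[x_1,x_2]$, since it is affine-linear in $x_2$; (ii) $X_1 = 0$ is not an invariant algebraic curve of \eqref{vdp}, because Lemma~\ref{lem:vdp} asserts that \emph{no} such curve exists.

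The case $r = 0$ is immediate: a non-constant rational $a$ with $\mathscr{X}a = 0$ would be a rational first integral whose level curves are algebraic and invariant, contradicting Lemma~\ref{lem:vdp}. For $r = 1,2,3$ the plan is a Darboux-style pole analysis. Writing a putative solution as $a = P/Q$ in lowest terms and clearing denominators by an appropriate power of $X_1$, each of \eqref{eq:31}, \eqref{eq:32}, \eqref{eq:33} becomes a polynomial identity relating $\mathscr{X}P$, $\mathscr{X}Q$, $X_1$, $x_1$, $P$ and $Q$. Using $\gcd(P,Q) = 1$, the irreducibility of $X_1$, and the fact that $X_1$ is not a Darboux polynomial of $\mathscr{X}$, I would extract from the identity a factor $f$ of $P$ or of $Q$ (coprime to $X_1$) satisfying $f \mid \mathscr{X}f$; then $f = 0$ is an invariant algebraic curve of \eqref{vdp}, again contradicting Lemma~\ref{lem:vdp}.

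The main obstacle I anticipate is the $r = 3$ case, where the right-hand side $2b_0 a + b_2 = -2x_1 a/X_1 - 6x_1/X_1^3$ carries the factor $X_1^3$ in the denominator. Here the divisibility chain is longer, and the delicate point is to rule out the possibility that $X_1$ divides $Q$ to a non-trivial power. This requires a local expansion of $a$ along $X_1 = 0$: comparing the orders of the pole on the two sides of $\mathscr{X}a = 2b_0 a + b_2$, and using that the derivation $\mathscr{X}$ is tangential to the foliation while $X_1$ is \emph{not} an invariant, one forces a specific value of the pole order that is incompatible with $a \in K$. Once this local obstruction is settled, the divisibility analysis reduces $r = 3$ to the same Darboux contradiction as $r = 1, 2$, and Theorem~\ref{th:2} then yields $r = \infty$.
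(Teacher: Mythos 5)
Your reduction via Theorem~\ref{th:2} and your computation of $b_0=-x_1/X_1$, $b_1=2x_1/X_1^2$, $b_2=-6x_1/X_1^3$ agree with the paper, and your Darboux-style divisibility analysis of the denominator is exactly what the paper does: writing $a=a_1/a_2$ in lowest terms and clearing $X_1^3$, one finds $a_2\mid X_1^3\mathscr{X}a_2$, and since $X_1$ is irreducible and $X_1=0$ is not invariant (Lemma~\ref{lem:vdp}), every non-$X_1$ irreducible factor of $a_2$ would be a Darboux polynomial; hence $a_2=X_1^k$, and a further divisibility count forces $k=2$. But this is where your plan breaks down, in two ways. First, your anticipated contradiction at this stage does not materialize: the pole-order comparison along $X_1=0$ does \emph{not} force a value ``incompatible with $a\in K$'' --- it forces exactly $k=2$, which is perfectly consistent, so the local analysis only normalizes the denominator rather than finishing the proof. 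Second, and more importantly, the residual problem is \emph{not} ``the same Darboux contradiction as $r=1,2$.'' After substituting $a=a_1/X_1^2$, the numerator $a_1$ satisfies the inhomogeneous polynomial identity \eqref{eq:42}; because of the inhomogeneous term $-6x_1$ (coming from $b_2$), one cannot conclude $f\mid\mathscr{X}f$ for any factor $f$ of $a_1$, so no invariant algebraic curve is produced and Lemma~\ref{lem:vdp} gives you nothing here. (The same defect would afflict your treatment of \eqref{eq:32}, which is also inhomogeneous, though by Theorem~\ref{th:2}(5) you only need to refute \eqref{eq:33}.)

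The missing ingredient is a direct proof that \eqref{eq:42} has no polynomial solution $a_1$, and this is the bulk of the paper's argument: writing $a_1=\sum_{i=0}^m h_i(x_2)x_1^i$, comparing the coefficient of $x_1^{m+5}$ forces $m=6$, and then comparing the coefficients of $x_1^i$ for $0\le i\le 10$ yields eleven differential--algebraic relations among the seven polynomials $h_0,\dots,h_6$, which combine to $x_2\bigl(3x_2h_5'(x_2)-2\mu h_4'(x_2)\bigr)=2\mu^3$ --- impossible since the left side vanishes at $x_2=0$. Without this (or some substitute for it), your proposal establishes only that any rational solution of \eqref{eq:33} must have the form $a_1/X_1^2$ with $a_1$ polynomial, which is not yet a contradiction.
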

\begin{proof}
Let
$$X_1(x_1,x_2) = x_2 - \mu
(\dfrac{x_1^3}{3} - x_1),\quad \ X_2(x_1,x_2) = -x_1,$$ 
then the equation
\eqref{eq:33} for the van der Pol equation \eqref{vdp} reads
\begin{equation}
\label{eq:37} X_1^3  \mathscr{X}a + 2 x_1 X_1^2  a + 6 x_1 = 0.
\end{equation}
We only need to show that \eqref{eq:37} has no rational
function solution $a$. 

If on the contrary, \eqref{eq:37} has a rational function
solution $a = a_1/a_2$, where $a_1, a_2$ are relatively prime
polynomials, then $a_1$ and $a_2$ satisfy
$$X_1^3 (a_2 \mathscr{X} a_1 - a_1 \mathscr{X}a_2) + 2 x_1 X_1^2 a_1 a_2 + 6  x_1 a_2^2 = 0,$$
i.e.
$$a_2 (X_1^3 \mathscr{X}a_1 + 2 x_1 X_1^2 a_1 + 6 x_1 a_2) =  a_1 X_1^3 \mathscr{X}a_2.$$
Hence, there exist a polynomial $c(x_1,x_2)$, such that
\begin{eqnarray}
\label{eq:38} X_1^3  \mathscr{X} a_2 &=& c  a_2,\\
\label{eq:39} X_1^3  \mathscr{X} a_1 &=& (c - 2  x_1  X_1^2) a_1 - 6
x_1 a_2.
\end{eqnarray}

Let $a_2 = X_1^k p$, where $k$ is the maximum integer such that the polynomial $p$ does not contain $X_1$ as a factor. Substitute $a_2$ into
\eqref{eq:38}, we have
$$X_1^3 \mathscr{X} p = p (c -  k   X_1^2 \mathscr{X} X_1).$$
Thus, $p| (X_1^3 \mathscr{X}p)$, and therewith $p | \mathscr{X}p$ because $X_1$ is a prime polynomial and $p$ does not contain $X_1$ as a factor. Therefore, either $p$ is a constant or the planar curve defined by $p(x_1,x_2)=0$ is an algebraic invariant curve of the van der Pol equation
\eqref{vdp}. However, Lemma \ref{lem:vdp} has excluded the latter case. Therefore, $p$ must be a constant. 

We can let $p = 1$ without loss of generality, and therefore
\begin{equation}
\label{eq:40} a_2 = X_1^k,\quad c = k X_1^2\, \mathscr{X}X_1.
\end{equation}
Substitute \eqref{eq:40} into \eqref{eq:39}, we have
\begin{equation}
\label{eq:41} X_1^3 \mathscr{X}a_1 = (k \mathscr{X}X_1 - 2  x_1
) X_1^2 a_1 - 6  x_1 X_1^k,\quad (k\geq 0).
\end{equation}
Note that
$$(k \mathscr{X}X_1 - 2 x_1) = -k \mu (x_1^2 - 1) X_1 - (k + 2) x_1,$$
\eqref{eq:41} can be rewritten as 
\begin{equation}
\label{eq:kk}
X_1^3 \mathscr{X}a_1 = -k \mu (x_1^2 - 1) X_1^3 a_1 - (k + 2) x_1 X_1^2 a_1 - 6 x_1 X_1^k.
\end{equation}
From \eqref{eq:kk}, we claim that $k = 2$. If otherwise, we should have  $X_1|(k+2) x_1a_1$ if $k > 2$, or $X_1| 6 x_1$ if $k < 2$, which are not possible. 

Let $k = 2$, then equation \eqref{eq:41} becomes
\begin{equation}
X_1 \mathscr{X}a_1 = (2 \mathscr{X}X_1 - 2  x_1
)  a_1 - 6  x_1,
\end{equation}
which gives
\begin{equation}
\label{eq:42} \begin{array}{rl} 
&\left(x_2 - \mu (\dfrac{x_1^3}{3} - x_1)\right) \left((x_2 -
\mu (\dfrac{x_1^3}{3} - x_1)) \dfrac{\partial a_1}{\partial x_1}
-x_1  \dfrac{\partial a_1}{\partial x_2}\right)\\
 = &\left(-2 \mu  (x_1^2 - 1) (x_2 - \mu (\dfrac{x_1^3}{3} - x_1)) - 4 x_1\right) a_1 - 6  x_1.
\end{array}
\end{equation}
Let
\begin{equation}
\label{eq:a1}
a_1(x_1,x_2) = \sum_{i = 0}^m h_i(x_2)  x_1^i,
\end{equation}
where $h_i(x_2)$ are polynomials and $h_m(x_2) \not=0$. Substituting
\eqref{eq:a1} into \eqref{eq:42}, and comparing the coefficient
of $x_1^{m+5}$, we have
$$\frac{1}{9}  \mu^2\, m\, h_m(x_2) = \frac{2}{3} \mu^2 h_m(x_2),$$
which implies $m = 6$. Hence, we have $7$ coefficients $h_i(x_2), (i
= 0,\cdots , 6)$ to be determined, which are all polynomials of $x_2$. Next, comparing the coefficients of $x_1^{i}\ ( 0\leq
i\leq 10)$, we obtain following 11 differential-algebra equations for the coefficients: 
\begin{eqnarray*}
0 &=& x_2  ( -2  \mu    h_0(x_2) +
x_2   h_1(x_2) )\\
0 &=& 6 - 2  ( -2 + {\mu }^2)    h_0(x_2)
+ 2  x_2^2   h_2(x_2) - x_2   h_0'(x_2)\\
0 &=& 2  \mu  x_2  h_0(x_2) - ( -4 + {\mu }^2 )  h_1(x_2) +
2  \mu  x_2  h_2(x_2) +
  3  x_2^2   h_3(x_2)\\
&&{} - \mu  h_0'(x_2) -
  x_2 h_1'(x_2)\\
 0 &=& \frac{8  \mu^2}{3}   h_0(x_2) + \frac{4  \mu  x_2 }{3}
 h_1(x_2) + 4   h_2(x_2) +
  4  \mu  x_2    h_3(x_2) + 4  x_2^2   h_4(x_2)\\
&&{} - \mu    h_1'(x_2) -
  x_2   h_2'(x_2)\\
0 &=& 2  {\mu }^2   h_1(x_2) + \frac{2  \mu  x_2 }{3} 
h_2(x_2) + 4  h_3(x_2) +
  {\mu }^2   h_3(x_2) + 6  \mu  x_2    h_4(x_2)\\
  &&{} + 5  x_2^2   h_5(x_2) +
  \frac{\mu }{3}   h_0'(x_2) - \mu    h_2'(x_2) -
  x_2   h_3'(x_2)\\
0 &=& \frac{1}{3}(-2  {\mu }^2   h_0(x_2) + 4  {\mu }^2 
h_2(x_2) + 12  h_4(x_2) +
    6  {\mu }^2   h_4(x_2) + 24  \mu  x_2    h_5(x_2)\\
    &&{} + 18  x_2^2   h_6(x_2) +
    \mu    h_1'(x_2) - 3  \mu  h_3'(x_2) -
    3  x_2   h_4'(x_2))\\
0 &=& \frac{1}{9} (-5  {\mu }^2   h_1(x_2) + 6  {\mu }^2 
h_3(x_2) - 6  \mu  x_2    h_4(x_2) +
    36   h_5(x_2) + 27  {\mu }^2   h_5(x_2)\\
    &&{} + 90  \mu x_2    h_6(x_2) +
    3  \mu    h_2'(x_2) - 9  \mu    h_4'(x_2) -
    9  x_2   h_5'(x_2))\\
0 &=& -\frac{4 {\mu }^2}{9}   h_2(x_2) - \frac{4  \mu  x_2
}{3}  h_5(x_2) + 4   h_6(x_2) +
  4  {\mu }^2   h_6(x_2) + \frac{\mu }{3}   h_3'(x_2)\\
  &&{} - \mu    h_5'(x_2) -
  x_2   h_6'(x_2)\\
0 &=& -\frac{\mu}{3} ( \mu    h_3(x_2) + 2  \mu    h_5(x_2) +
6  x_2  h_6(x_2) -
         h_4'(x_2) + 3   h_6'(x_2) )\\
0 &=& -\frac{\mu}{9} ( 2  \mu    h_4(x_2) + 12  \mu  h_6(x_2)
-
3   h_5'(x_2) )\\
0 &=&- \frac{\mu}{9} ( \mu    h_5(x_2) - 3   h_6'(x_2))
\end{eqnarray*}
The above equations yield the following
\begin{equation}
\label{eq:hm}
x_2 (3 x_2 h_5'(x_2) - 2 \mu h_4'(x_2)) = 2 \mu^3.
\end{equation}
But \eqref{eq:hm} can not be satisfied because $h_4(x_2)$
and $h_5(x_2)$ are polynomials, and the left hand side contains a factor $x_2$, while the right hand side does not. Thus, we conclude that \eqref{eq:37} has no rational function solution, and hence the order of the
van der Pol equation is infinity from Theorem \ref{th:2}.
\end{proof}

\section*{Appendix}

\begin{lem}
\label{le:app}
Consider following partial differential equations
\begin{equation}
\label{eq:app.1}
\left\{
\begin{array}{rcl}
\dfrac{\partial u}{\partial x_1} &=& f(x_1,x_2,u)\\
\dfrac{\partial u}{\partial x_2} &=& g(x_1,x_2,u)
\end{array}
\right.
\end{equation}
Let
$$D_1 = \dfrac{\partial\ }{\partial x_1} + f(x_1,x_2,u)\dfrac{\partial\ }{\partial u},\quad D_2 = \dfrac{\partial\ }{\partial x_2} + g(x_1,x_2,u)\dfrac{\partial\ }{\partial u}.$$
If the functions $f$ and $g$ are analytic, and satisfy
\begin{equation}
\label{eq:DD}
D_2 f(x_1,x_2,u) \equiv D_1 g(x_1,x_2,u),
\end{equation}
in a neighborhood of $(0,0,0)$,  then the equation \eqref{eq:app.1} has a unique solution $u = u(x_1,x_2)$ that is analytic on a neighborhood of $(0,0)$ and $u(0,0) = 0$.
\end{lem}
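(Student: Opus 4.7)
The plan is to reduce the overdetermined analytic system \eqref{eq:app.1} to two successive Cauchy problems for analytic ordinary differential equations, and then to deduce the first PDE automatically from the compatibility hypothesis \eqref{eq:DD} via a linear ODE comparison.

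First, by the classical analytic Cauchy--Picard theorem, the initial-value problem
\begin{equation*}
v'(x_1)=f(x_1,0,v(x_1)),\qquad v(0)=0
\end{equation*}
admits a unique holomorphic solution $v$ on a neighborhood of $0\in\mathbb{C}$. For each $x_1$ sufficiently close to $0$, I then solve the second equation of \eqref{eq:app.1}, regarded as an ODE in $x_2$ with parameter $x_1$:
\begin{equation*}
\partial_{x_2}u=g(x_1,x_2,u),\qquad u(x_1,0)=v(x_1).
\end{equation*}
The analytic Cauchy theorem with analytic dependence on parameters yields a unique $u(x_1,x_2)$, holomorphic on a neighborhood of $(0,0)$, with $u(0,0)=v(0)=0$. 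By construction, $u$ obeys the second equation of \eqref{eq:app.1}.

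The heart of the argument is to verify the first equation. Introduce the error
\begin{equation*}
E(x_1,x_2)=\partial_{x_1}u(x_1,x_2)-f(x_1,x_2,u(x_1,x_2)).
\end{equation*}
On the slice $x_2=0$, $E(x_1,0)=v'(x_1)-f(x_1,0,v(x_1))=0$. Differentiating $\partial_{x_2}u=g(x_1,x_2,u)$ with respect to $x_1$ gives $\partial_{x_2}\partial_{x_1}u=\partial_{x_1}g+(\partial_u g)\,\partial_{x_1}u$, while differentiating $f(x_1,x_2,u)$ with respect to $x_2$ produces $\partial_{x_2}f+(\partial_u f)\,g=D_2f$. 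Subtracting and invoking the compatibility identity $D_1g=D_2f$, i.e.\ $\partial_{x_1}g+f\,\partial_u g=\partial_{x_2}f+g\,\partial_u f$, the cross terms collapse to
\begin{equation*}
\partial_{x_2}E=(\partial_u g)\,E.
\end{equation*}
This is a homogeneous linear ODE in $x_2$ with vanishing initial datum $E(x_1,0)\equiv 0$, so uniqueness forces $E\equiv 0$ on a neighborhood of $(0,0)$. Hence $u$ satisfies both equations of \eqref{eq:app.1}.

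Uniqueness of $u$ is immediate: any analytic solution with $u(0,0)=0$ restricts on $x_2=0$ to a solution of the ODE for $v$, so it coincides with $v$ there, and then the second step pins down $u$ globally by uniqueness for the $x_2$-ODE. The main obstacle is the algebraic identity in the verification step --- the compatibility hypothesis must be applied with precise bookkeeping of the chain rule so that the mixed derivative terms cancel and $E$ genuinely satisfies a homogeneous linear ODE. Once that cancellation is in place, the rest of the proof is routine analytic ODE theory; in particular, no direct method-of-majorants estimate is needed for this formulation.
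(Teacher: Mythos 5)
Your proof is correct, but it takes a genuinely different route from the paper. The paper works directly with the formal power series $u(x_1,x_2)=\sum u_{i,j}x_1^ix_2^j$: it derives explicit recursive formulas $u_{m,0}=\frac{1}{m!}D_1^{m-1}f|_{(0,0)}$ and $u_{m,n}=\frac{1}{m!\,n!}D_1^mD_2^{n-1}g|_{(0,0)}$, proves convergence by an explicit Method of Majorants comparison with $F=M/(1-(x_1+x_2+u)/\rho)$, and then uses the operator identity $D_1^mD_2^{n-1}g=D_1^{m-1}D_2^nf$ (a consequence of \eqref{eq:DD}) to check that the series satisfies both equations. You instead reduce to two successive analytic Cauchy problems (an ODE in $x_1$ on the slice $x_2=0$, then an ODE in $x_2$ with parameter $x_1$), and verify the first equation a posteriori by showing the error $E=\partial_{x_1}u-f$ satisfies the homogeneous linear ODE $\partial_{x_2}E=(\partial_ug)E$ with $E(x_1,0)\equiv 0$; your computation of $\partial_{x_2}E$ and the cancellation via $\partial_{x_1}g+f\,\partial_ug=\partial_{x_2}f+g\,\partial_uf$ is exactly right, and your uniqueness argument by restriction to the slice is sound. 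What each approach buys: yours is shorter and delegates all convergence issues to the classical analytic Cauchy--Picard theorem (the majorants are hidden inside that citation rather than eliminated), and it is the standard Frobenius-integrability argument; the paper's version is more self-contained and produces the explicit coefficient formulas in terms of iterated $D_1,D_2$, which is of some independent computational value given that Lemma~\ref{lem:a.15} invokes this construction to build the first integral in the order-$3$ case. One small point worth making explicit in your write-up: the hypothesis \eqref{eq:DD} is an identity in the three independent variables $(x_1,x_2,u)$ near $(0,0,0)$, so it may legitimately be evaluated along the graph $u=u(x_1,x_2)$, which is what your cancellation step requires.
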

\begin{proof}
Without loss of generality, we assume that $f$ and $g$ are analytic in 
$$\Omega = \{(x,y,u)\in \mathbb{C}^3 \Big| |x_1| + |x_2| + |u| \leq \rho\},$$
where $\rho$ is positive.  Then we can write $f(x_1,x_2,u)$ and $g(x_1,x_2,u)$ as power series
\begin{equation}
f(x_1,x_2,u) = \sum_{i,j,k} f_{i,j,k} x_1^i x_2^ju^k
\end{equation}
and
\begin{equation}
g(x_1,x_2,u)=\sum_{i,j,k} g_{i,j,k} x_1^i x_2^j u^k,
\end{equation}
respectively, and these series are convergent in $\Omega$.

Let
\begin{equation}
\label{eq:app.u}
u(x_1,x_2) = \sum_{i=0}^\infty \sum_{j=0}^\infty u_{i,j} x_1^i x_2^j,\quad (u_{0,0}= 0),\end{equation}
and substitute it into \eqref{eq:app.1}, we have the following equations
\begin{eqnarray}
\label{eq:app.2}
\sum_{i,j} i u_{i,j}x_1^{i-1}x_2^j &=& \sum_{i,j,k} f_{i,j,k} x_1^i x_2^j (\sum_{p,q} u_{p,q}x_1^px_2^q)^k\\
\label{eq:app.3}
\sum_{i,j} j u_{i,j} x_1^i x_2^{j-1} &=& \sum_{i,j,k} g_{i,j,k} x_1^i x_2^j (\sum_{p,q} u_{p,q} x_1^p x_2^q)^k.
\end{eqnarray}
First, from \eqref{eq:app.2} and comparing the coefficients of the same degrees of $x_1^m,\ (m \geq 1)$, we have
\begin{equation}
\label{eq:app.4}
u_{1,0} = f_{0,0,0}, 
\end{equation} 
and
\begin{equation}
\label{eq:app.5}
u_{m,0} = \dfrac{1}{m!} D_1^{m-1} f(x_1,x_2,u(x_1,x_2))|_{(x_1,x_2) = (0,0)}.
\end{equation}
Next, from \eqref{eq:app.3} and comparing the coefficients of the same degrees of $x_1^m x_2^n\ (n\geq 1)$, we have
\begin{equation}
\label{eq:app.6}
u_{0,1} = g_{0,0,0}, 
\end{equation}
and
\begin{equation}
\label{eq:app.7}
u_{m,n} = \dfrac{1}{m! n!} D_1^mD_2^{n-1} g(x_1,x_2,u(x_1,x_2))|_{(x_1,x_2) =(0,0)}.
\end{equation}

The right hand side of \eqref{eq:app.5} is a polynomial of $u_{i,0}$ with $i<m$. Thus, the coefficients $u_{m, 0}\ (m>0)$ are well defined by \eqref{eq:app.4} and \eqref{eq:app.5} step by step. Similarly, the right hand side of \eqref{eq:app.7} is a polynomial of the coefficients $u_{i,j}$ with $i < n$, $j\leq m$ an $i+j \leq m+ n - 1$. Thus, the coefficients of form $u_{m,n}\ (n\geq 1)$ can be determined by \eqref{eq:app.6}, \eqref{eq:app.7}, and the coefficients $u_{m,0}$ obtained previously. Thus, the coefficients in the power series \eqref{eq:app.u} are well defined and unique. Convergency of this power series can be proved by the Method of Majorants as follows. 

Let
$$M = \max_{(x,y,u)\in \Omega} \{|f(x_1,x_2,u)|, |g(x_1,x_2,u)|\}, $$
then 
\begin{equation}
F(x,y,u) = \dfrac{M}{1 -\dfrac{x_1+x_2 + u}{\rho}}
\end{equation}
is a majorant function of both $f(x_1,x_2,u)$ and $g(x_1,x_2,u)$.  Thus, following equation
\begin{equation}
\label{eq:app.m}
\left\{
\begin{array}{rcl}
\dfrac{\partial u}{\partial x_1} &=& F(x_1,x_2,u)\\
\dfrac{\partial u}{\partial x_2} &=& F(x_1,x_2,u)
\end{array}
\right.
\end{equation}
majorize the equation \eqref{eq:app.1}. It is easy to verify that, the equation \eqref{eq:app.m} has an analytic solution $u(x_1,x_2) = U(x_1+x_2)$, with $U(z)$ the analytic solution of 
\begin{equation}
\dfrac{d U}{d z} = \dfrac{M}{1 - \dfrac{z + U}{\rho}},\quad U(0) = 0.
\end{equation}
Thus, the convergency of \eqref{eq:app.u} is concluded by the Method of Majorants. Therefore, the function $u(x_1,x_2)$ given by \eqref{eq:app.u} is well defined in $\Omega$.

Finally, we need to show that the function $u(x_1,x_2)$ obtained above satisfies \eqref{eq:app.1}. 
We note that  when $m \geq 1$, \eqref{eq:DD} yields
$$D_1^m D_2^{n-1} g = D_1^{m-1} D_2^{n-1} D_1 g = D_1^{m-1} D_2^{n-1} D_2 f = D_1^{m-1} D_2^n f.$$
Thus, \eqref{eq:app.7} is equivalents to
\begin{equation}
\label{eq:app.8}
u_{m,n} = \dfrac{1}{m! n!} D_1^{m-1} D_2^{n} f(x_1,x_2,u(x_1,x_2))|_{(x_1,x_2) = (0,0)}.
\end{equation}
Therefore, from \eqref{eq:app.4}-\eqref{eq:app.7} and \eqref{eq:app.8}, the function $u(x_1,x_2)$ satisfies both equations in \ref{eq:app.1}. The Lemma has been proved.
\end{proof}

\bibliographystyle{elsarticle-num}      
\bibliography{classification}   

\begin{thebibliography}{1}
\expandafter\ifx\csname url\endcsname\relax
  \def\url#1{\texttt{#1}}\fi
\expandafter\ifx\csname urlprefix\endcsname\relax\def\urlprefix{URL }\fi
\expandafter\ifx\csname href\endcsname\relax
  \def\href#1#2{#2} \def\path#1{#1}\fi

\bibitem{Ka:76}
I.~Kaplansky, An Introduction to Differential Algebra, 2nd Edition, Hermann.,
  Paris, 1976.

\bibitem{Ritt:50}
J.~F. Ritt, Differential Algebra, Amer. Amth. Soc. Coll. Pub., New York, 1950.

\bibitem{Singer:92}
M.~F. Singer, Liouvillian first integrals of differential equations, Trans.
  Amer. Math. Soc 333~(2) (1992) 673--688.

\bibitem{Guan:02}
K.~Guan, J.~Lei, Integrability of second order autonomous system, Ann. Diff.
  Equ. 18~(2) (2002) 117--135.

\bibitem{L:41}
J.~Liouville, Remarques nouvelles sur l'\'{e}quation de riccati, Journal de
  Math\'{e}matiques Pures et Appliqu\'{e}es VI (1841) 1--13.

\bibitem{Cheng:95}
R.~Cheng, K.~Guang, S.~Zhang, On the judgment of the existence of algebraic
  curve solution to the second order polynomial autonomous system, Journal of
  Beijing University of Aeronautics and Astronautics 21~(1) (1995) 109--115.

\bibitem{Oda:95}
K.~Odani, The limit cycle of the van der pol equation is not algebraic, J.
  Diff. Equas. 115 (1995) 146--152.

\end{thebibliography}

\end{document}